\theoremstyle{plain}
\newtheorem{theorem}{Theorem}[section]
\newtheorem{lemma}[theorem]{Lemma}
\newtheorem{proposition}[theorem]{Proposition}
\newtheorem{corollary}[theorem]{Corollary}
\theoremstyle{definition}
\newtheorem{definition}[theorem]{Definition}
\newtheorem{remark}[theorem]{Remark}
\theoremstyle{remark}
\numberwithin{equation}{section}
\numberwithin{figure}{section}
\numberwithin{table}{figure}
\newcommand*{\rom}[1]{\expandafter\@slowromancap\romannumeral #1@}
\newcommand{\pt}[1]{\left({#1}\right)}
\newcommand{\pq}[1]{\left[{#1}\right]}
\newcommand{\ps}[2]{\left\langle{#1},{#2}\right\rangle}
\newcommand{\rest}[2]{\left.{#1}\right|_{#2}}
\newcommand{\pg}[1]{\left\{{#1}\right\}}
\newcommand{\abs}[1]{\left|{#1}\right|}
\newcommand{\alb}[1]{\,\alpha^{\overline{#1}}}
\newcommand{\al}[2]{\,\alpha^{#1\bar{#2}}\,}
\newcommand{\aldue}[3]{\,\alpha^{#1\bar{#2}\bar{#3}}\,}
\newcommand{\lambdah}[1]{\Lambda^{{#1}}_{\mathfrak h}}
\newcommand{\lambdahh}[2]{\Lambda^{{#1},{#2}}_{\mathfrak h}}
\newcommand{\lambdakk}[3]{\Lambda^{{#2},{#3}}_{\mathfrak {#1}}}
\newcommand{\vol}[1]{\text{Vol}_{\mathfrak{#1}}}
\newcommand{\R}{\mathbb{R}}
\newcommand{\C}{\mathbb{C}}
\newcommand{\diffh}{d_{\mathfrak h}}
\newcommand{\del}{\partial\!}
\title{A note on $p$-K\"ahler structures on compact quotients of Lie groups}
\author{Anna Fino}
\address[Anna Fino]{Dipartimento di Matematica ``G. Peano'', Universit\`{a} degli studi di Torino \\
Via Carlo Alberto 10\\
10123 Torino, Italy\\
\& Department of Mathematics and Statistics, Florida International University\\
Miami, FL 33199, United States}
\email{annamaria.fino@unito.it, afino@fiu.edu}
\author{Asia Mainenti}
\address[Asia Mainenti]{Dipartimento di Matematica ``G. Peano'', Universit\`{a} degli studi di Torino \\
Via Carlo Alberto 10}
\email{asia.mainenti@unito.it}
\keywords{$p$-K\"ahler structure, nilmanifold, almost abelian solvmanifold}
\date{\today}
\begin{document}

\begin{abstract}
A  $p$-K\"ahler  structure  on a  complex manifold  of complex dimension $n$  is  given by  a $d$-closed  transverse  real $(p,p)$-form.      In the paper we study  the existence of $p$-K\"ahler structures  on compact quotients of simply connected Lie groups by discrete subgroups endowed with an invariant complex structure. In particular, 
we  discuss the existence of $p$-K\"ahler structures on nilmanifolds, with a focus on the case $p =2$ and  complex dimension $n = 4$.  
Moreover, we prove that a $(n-2)$-K\"ahler  almost abelian solvmanifold of complex dimension  $n\geq3$  has to be K\"ahler.
\end{abstract}
    
    \maketitle
    
\section{Introduction}

A  $p$-K\"ahler structure   on a  complex manifold $(X,  J)$ of complex dimension $n$  is  given by  a $d$-closed  transverse  real $(p,p)$-form $\Omega$.   
The $p$-K\"ahler  structures have been introduced and studied in \cite{AA, AA2, AB}.  Recently, their behavior under small deformations of the complex structure has been studied in \cite{RWZ}.

Some obstructions to their existence were determined in \cite{hmt21}, where the authors extended the definition to non-integrable almost complex manifolds, and in \cite{Sferruzza-Tardini}, on nilmanifolds with nilpotent complex structures.
In \cite{AB2}, Alessandrini and Bassanelli conjectured that if $X$ is $p$-K\"ahler then it is $q$-K\"ahler for all $p\leq q<n$.

For $p=1,n-1$, transversality is equivalent to positive definiteness, so in the first case we find the K\"ahler condition, whereas in the latter this property is equivalent to the balanced one.
In  complex dimension $3$ these are all the possible cases, and both have been thoroughly studied, so we will consider higher dimension, where more cases arise.

Examples of $2$-K\"ahler structures on {compact non-K\"ahler} complex manifolds were constructed  in  complex dimension $5$, {using}  a smooth proper modification of  $\mathbb P^5$  {\cite[Section 4]{AB2}}, and lately it was proven that non--compact examples exist  in any dimension greater than $2$, as products of $\C^m$ and a balanced complex manifold of  complex dimension $3$ \cite[Theorem 5.3]{AlessandriniLieGroups}.
On the other hand, the existence of a $2$-K\"ahler structure on a compact  (non-K\"ahler) complex manifold  of complex dimension $4$ is quite restrictive. 
As a far as we know no{  examples}  in the literature are known.

In Section \ref{preliminaries}, after recalling a few definitions and some known results on $p$-K\"ahler structures, we use the symmetrization process described in \cite{Belgun}  {(see also \cite{FG, Ugarte}),} to prove that on compact quotients of simply connected Lie groups by lattices, {endowed with  invariant complex structures,}  the existence of $p$-K\"ahler structures implies the existence of invariant ones (Lemma \ref{symmetrization}). 
{By invariant $p$-K\"ahler structure we mean one induced by a left-invariant  one on $G$ or, equivalently, by a $p$-K\"ahler structure on the Lie algebra $\mathfrak g$ of $G$.}
Furthermore, we find some necessary conditions to the existence of {$p$-K\"ahler} structures on Lie algebras  endowed {with a complex structure $J$   such that there exists  a  $J$-invariant ideal of codimension $2$} (Proposition \ref{jinvideal}), that we will use in the nilpotent case.

Section \ref{secNilmanifolds} is devoted to the nilpotent case.
We show some obstructions to the existence of $p$-K\"ahler structures when the {nilmanifold} is endowed with a {quasi-nilpotent} complex structure $J$, namely such that the center {of the associated  nilpotent Lie algebra} has a non-trivial $J$-invariant subspace, with a focus on $p=2$ (Proposition \ref{quasinilppK}). 
Later on, we study  nilmanifolds {of complex dimension $4$} and prove that they do not admit $2$-K\"ahler structures,  unless they are tori (Theorem \ref{no2k8}).
We use this result as the first step of induction to prove that a nilmanifold  of  complex dimension {greater then 3} endowed with an invariant  quasi--nilpotent  complex structure cannot admit  $2$-K\"ahler structures unless it is a torus {(Theorem \ref{quasinilpNON2k}).}

Finally, we  consider the almost abelian case. 
We recall that a Lie group is called almost abelian if its Lie algebra has a codimension one abelian ideal.
In Section \ref{secAlmostAb} we  prove that   for almost abelian solvmanifolds of complex dimension  $n\geq3$, the $(n-2)$-K\"ahler condition implies K\"ahler  (Theorem \ref{ThmAlmostAb}). 
This gives yet another case in complex dimension $4$   of compact complex manifolds  for which the existence of a  $2$-K\"ahler structure forces the manifold to be K\"ahler.

\section{Preliminaries on $p$-K\"ahler structures}\label{preliminaries}

Let $V$ be a complex vector space of dimension $n$ and let us denote by $\Lambda^{p,q}: =\Lambda^{p,q}V^*$ the space of $(p, q)$ forms over $V$.
In the following lines we will recall a few positivity notions for differential forms.

\begin{definition}\label{poscond}
\begin{enumerate}
    \item A  $(n, n)$-form $\nu$  on $V$ is \textit{positive} if $\nu= c\,i\alpha^1\wedge\alpha^{\bar1}\wedge\cdots\wedge i\alpha^n\wedge\alpha^{\bar{n}}$, where $c\in\R_{\geq0}$ and $\pg{\alpha^j}_{j=1}^n$ is a basis of $\Lambda^{1,0}$.
    If $c>0$, we will call $\nu$ a \textit{volume form}.
    \item A $(q,0)$-form $\psi$  on $V$  is called \textit{simple} if $\psi=\mu_1\wedge\dots\wedge\mu_q$, with $\mu_1,\dots,\mu_q\in\Lambda^{1,0}$.
    A $(k,k)$-form $\Omega$ is called \textit{transverse} if for every {non--zero} simple $(n-k,0)$-form $\psi$,
    $$
    i^{\pt{n-k}^2}\,\Omega\wedge\psi\wedge\bar\psi
    =\Omega\wedge i\mu_1\wedge\bar\mu_1\wedge\dots\wedge i\mu_{n-k}\wedge\bar\mu_{n-k}
    $$
    is a volume form.
    \item A $(k,k)$-form $\Omega$ on $V$   is said to be \textit{positive definite} if for all $ {0\neq}\eta\in\Lambda^{n-k,0}$, the $(n,n)$-form 
    $$
    i^{\pt{n-k}^2}\,\Omega\wedge\eta\wedge\bar\eta
    $$
    is a volume form. 
    \item A $(k,k)$-form $\Omega$  on $V$  is \textit{strongly positive} if it can be written as 
    $$\Omega=i^{{k}^2}\sum_j\psi_j\wedge\bar{\psi}_j,$$
    with $\psi_j\in\Lambda^{k,0}$ simple.
\end{enumerate}
\end{definition}

It follows from Definition  \ref{poscond},  that every strongly positive form is positive definite and every positive definite form is transverse.
Moreover every transverse form is real.
Notice that for $k=1, n-1$, a $(k,0)$-form is always simple (see for example \cite{hk}).
In fact, for $k=1, n-1$,  (2), (3) and (4) in Definition \ref{poscond} are all  equivalent. 

\smallskip

Using the previous notion of tranversality one can introduce the following 
\begin{definition}

Let $(X, J)$  be complex manifold  of complex dimension $n$ and let $1 \leq p < n$.  A \textit{$p$-K\"ahler}  {structure}  on $X$ is   given by  a $d$-closed  real $(p,p)$-form  $\Omega$ such that,  
at every point $x\in X$, $\Omega_x \in  \Lambda^{p,p} (T_x X)$ is  transverse.
\end{definition}

For $p=1$ a transverse form is nothing but  the fundamental form associated to a Hermitian metric on $(X,J)$.
This means that a $1$-K\"ahler structure actually gives a K\"ahler metric on $(X,J)$.
On the other hand, when $p=n-1$, we know by \cite{michelsohn} that every strongly positive $(n-1,n-1)$-form can be written as the $(n-1)$-th power of a strictly positive $(1,1)$-form and so determines a Hermitian metric.
It follows that the datum of a $(n-1)$-K\"ahler structure is equivalent to that of a balanced Hermitian metric.
Note that these are the only two cases where $p$-K\"ahler structures have metric meaning (cf. \cite[Proposition 2.1]{AB}).

The following result gives an obstruction to the existence of {$p$-K\"ahler} structures on compact complex manifolds. 
\begin{proposition}[\cite{hmt21}] \label{obstructpK}
   Let $(X,J)$ be a compact complex manifold.  Suppose there exists a $(2n -2p -1)$-form  $\beta$  on $X$ such that   \begin{equation*}
        0\neq\pt{d\beta}^{n-p,n-p}=\sum_jc_j\,\psi_j\wedge\overline{\psi_j},
    \end{equation*}
    where $c_j\in\R$ have the same sign and $\psi_j$ are simple $(n-p,0)$-forms.
    Then $(X,J)$ does not admit any {$p$-K\"ahler} structure.
\end{proposition}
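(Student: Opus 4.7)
The plan is to argue by contradiction. Suppose $\Omega$ is a $p$-K\"ahler structure on $X$, so $\Omega$ is a $d$-closed real transverse $(p,p)$-form. The key idea is to integrate $\Omega\wedge d\beta$ over $X$ in two ways.

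First, since $\Omega$ is $d$-closed and of even degree, $d(\Omega\wedge\beta)=\Omega\wedge d\beta$, and Stokes' theorem on the compact manifold $X$ gives $\int_X \Omega\wedge d\beta = 0$. Second, since $\Omega$ has bidegree $(p,p)$ and we need the $(n,n)$-component of the integrand, only the $(n-p,n-p)$-component of $d\beta$ contributes; therefore
$$
0 \;=\; \int_X \Omega\wedge d\beta \;=\; \int_X \Omega\wedge (d\beta)^{n-p,n-p} \;=\; \sum_j c_j \int_X \Omega\wedge \psi_j\wedge\overline{\psi_j}.
$$

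The main step is to show that the right-hand side cannot vanish, using the transversality of $\Omega$. Multiplying by $i^{(n-p)^2}$, the definition of transversality guarantees that for each $j$ the $(n,n)$-form $i^{(n-p)^2}\,\Omega\wedge\psi_j\wedge\overline{\psi_j}$ is a nonnegative multiple of a fixed volume form at every point of $X$, and is strictly positive at any point where $\psi_j\neq 0$. Because the $c_j$ all share a common sign and the form $\sum_j c_j\,\psi_j\wedge\overline{\psi_j}$ is not identically zero, there is at least one point where some $c_j\psi_j\wedge\overline{\psi_j}$ is nonzero; by continuity this persists on an open set. Hence all summands in
$$
i^{(n-p)^2}\sum_j c_j \int_X \Omega\wedge \psi_j\wedge\overline{\psi_j}
$$
have a common sign and at least one is strictly nonzero, contradicting the vanishing obtained from Stokes.

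There is no truly hard step here: the proof is a standard pairing argument of a closed form with an exact one. The only point requiring care is the bookkeeping of the factor $i^{(n-p)^2}$ and the sign convention for positive $(n,n)$-forms from Definition \ref{poscond}, so that the transversality of $\Omega$ is applied to each simple $\psi_j$ in a way that produces integrands of a common sign.
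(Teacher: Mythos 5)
Your argument is correct and is the standard proof of this statement: the paper itself gives no proof (it cites \cite{hmt21}), and the argument there is exactly this Stokes pairing, $0=\int_X\Omega\wedge d\beta=\sum_j c_j\int_X\Omega\wedge\psi_j\wedge\overline{\psi_j}$, combined with the pointwise nonnegativity of $i^{(n-p)^2}\Omega\wedge\psi_j\wedge\overline{\psi_j}$ coming from transversality. The one point worth keeping explicit, which you do handle, is that the hypothesis $(d\beta)^{n-p,n-p}\neq 0$ forces some $\psi_j$ with $c_j\neq 0$ to be nonzero on an open set, so at least one summand is strictly of the common sign.
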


\smallskip

Let us now consider as a complex manifold  $(X, J)$   the  compact quotient  $X = \Gamma \backslash G$   of a simply connected  Lie group $G$  by a  discrete subgroup $\Gamma$ endowed with  an invariant complex structure  $J$, i.e. a complex structure  induced by a complex structure on $\mathfrak g$.

{
{Next we prove that the existence of a $p$-K\"ahler structure   on  $(X = \Gamma \backslash G, J)$ implies the existence of an invariant one. 

\begin{lemma}\label{symmetrization}
If $(X = \Gamma \backslash G, J)$  admits a $p$-K\"ahler structure $\Omega$,  then $(\mathfrak g, J)$  has  a $p$-K\"ahler structure.
\end{lemma}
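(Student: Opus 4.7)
The plan is to average $\Omega$ over $X$ to produce an invariant $(p,p)$-form on $\mathfrak{g}$ that remains closed and transverse; this is the symmetrization procedure of \cite{Belgun} (see also \cite{FG, Ugarte}). Compactness of $X = \Gamma\backslash G$ forces $G$ to be unimodular, so it carries a bi-invariant volume form $d\mu$ which descends to $X$; I normalize so that $\int_X d\mu = 1$. Lift $\Omega$ to a $\Gamma$-left-invariant form $\tilde\Omega$ on $G$ and set, for each $x\in G$,
\[
\phi(x) := L_x^*\tilde\Omega \;\in\; \Lambda^{p,p}\mathfrak{g}^*,
\]
where $L_x$ denotes left translation. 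Since $J$ is left-invariant, $dL_x$ is $\C$-linear and the pullback preserves the $(p,p)$-type; and since $L_{\gamma x}=L_\gamma\circ L_x$ with $L_\gamma^*\tilde\Omega=\tilde\Omega$, the value $\phi(x)$ depends only on the class $\Gamma x \in X$. The symmetrized form will be defined by
\[
\Omega_{\mathfrak g} := \int_X \phi(x)\, d\mu(x) \;\in\; \Lambda^{p,p}\mathfrak{g}^*.
\]

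Closedness of $\Omega_{\mathfrak g}$ is routine: pullback commutes with $d$, so $d\phi(x)=L_x^*(d\tilde\Omega)=0$ in $\Lambda^*\mathfrak{g}^*$ for every $x$, and the Lie-algebra differential commutes with integration against the scalar measure $d\mu$ on the finite-dimensional space $\Lambda^*\mathfrak{g}^*$. The lemma therefore reduces to checking that $\Omega_{\mathfrak g}$ is transverse.

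This transversality is the main obstacle. Fix an arbitrary nonzero simple $(n-p,0)$-form $\psi=\mu_1\wedge\cdots\wedge\mu_{n-p}\in\Lambda^{n-p,0}\mathfrak{g}^*$. Because $dL_x$ is a $\C$-linear isomorphism, $(L_{x^{-1}})^*\psi$ is again a nonzero simple $(n-p,0)$-form at $x$, so transversality of $\Omega$ at $\pi(x)$ yields a positive volume form
\[
i^{(n-p)^2}\,\tilde\Omega_x \wedge (L_{x^{-1}})^*\psi \wedge \overline{(L_{x^{-1}})^*\psi}
\]
on $T_xG$. Pulling back by $L_x$, which is orientation-preserving because it is $\C$-linear, one obtains a continuous strictly positive function $c_\psi\colon X\to\R_{>0}$ with
\[
i^{(n-p)^2}\,\phi(x)\wedge\psi\wedge\bar\psi = c_\psi(x)\,\vol{g},
\]
so that $i^{(n-p)^2}\Omega_{\mathfrak g}\wedge\psi\wedge\bar\psi = \bigl(\int_X c_\psi\,d\mu\bigr)\vol{g}$, a strictly positive multiple of $\vol{g}$ since $c_\psi$ is positive and continuous on the compact $X$. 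As $\psi$ was arbitrary, $\Omega_{\mathfrak g}$ is transverse, establishing the lemma. The conceptual reason the averaging works is that transversality is defined by a family of linear positivity conditions, each stable under integration against a positive measure.
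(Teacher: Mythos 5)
Your overall strategy --- averaging $\Omega$ against the bi-invariant volume and checking that transversality survives because it is a fiberwise positivity condition preserved by integration against a positive measure --- is exactly the paper's, and your transversality argument is sound: it is purely pointwise linear algebra, the $\C$-linearity of $dL_x$ guarantees that $(L_{x^{-1}})^*\psi$ is again nonzero and simple of type $(n-p,0)$, and the resulting density $c_\psi$ is positive, so its integral is positive.

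The gap is in the closedness step. You assert that $d\phi(x)=L_x^*(d\tilde\Omega)=0$ in $\Lambda^{\bullet}\mathfrak g^*$ for every $x$ ``because pullback commutes with $d$''. But the differential acting on $\phi(x)\in\Lambda^{p,p}\mathfrak g^*$ is the Chevalley--Eilenberg differential, which computes the exterior derivative of the \emph{left-invariant extension} of the covector $\phi(x)$, not the value at $e$ of $d\bigl(L_x^*\tilde\Omega\bigr)$. Since $L_x^*\tilde\Omega$ is not left-invariant, these two objects differ: evaluating Cartan's formula on left-invariant fields $Y_0,\dots,Y_{2p}$ gives
\[
\bigl(L_x^* d\tilde\Omega\bigr)_e(Y_0,\dots,Y_{2p})
=\bigl(d_{\mathfrak g}\phi(x)\bigr)(Y_0,\dots,Y_{2p})
+\sum_i(-1)^i\Bigl(Y_i\cdot\phi(\cdot)(Y_0,\dots,\widehat{Y_i},\dots,Y_{2p})\Bigr)(x),
\]
so $d_{\mathfrak g}\phi(x)$ need not vanish pointwise even though $d\tilde\Omega=0$. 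What rescues the conclusion is that the correction terms are derivatives, along left-invariant (hence $\nu$-divergence-free, by unimodularity) vector fields, of functions on the compact manifold $X$, and therefore integrate to zero against $\nu$; this is precisely the content of Belgun's lemma that symmetrization commutes with $d$, which the paper invokes rather than reproves. You should either cite that lemma for the closedness of $\Omega_{\mathfrak g}$ or supply the divergence/Stokes argument just described; as written, the pointwise claim is false.
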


\begin{proof}
Let $\nu$ be a volume element on $X$ induced by a bi-invariant one on the Lie group G (the existence of such a volume form was proved in \cite{Milnor}).
After rescaling, we can suppose that $X$ has volume equal to 1.
Given the $p$-K\"ahler structure $\Omega$,
 by symmetrization,  we can define  the $(p,p)$-form $\Omega_{\nu}$ on $\mathfrak g$,  given by
 $$
 \Omega_{\nu} (Y_1, \ldots, Y_{2p}) = \int_{x \in X} \Omega_x (Y_1 \vert_x, \ldots,  Y_{2p} \vert_x) \nu,
 $$
for every $Y_j \in \mathfrak g$, where $Y_j \vert_x$ is the value at the point  $x \in X$  of the projection on $X$ of the left-invariant vector field $Y_j$ on the Lie group  $G$.
By \cite{Belgun} (see also \cite{FG, Ugarte})  the symmetrization commutes with the differential $d$, so  $d \Omega_{\nu} =0$.
We only need to show that $\Omega_{\nu}$ is still transverse, i.e. that  $
    i^{\pt{n-p}^2}\,\Omega_{\nu} \wedge\psi\wedge\bar\psi
  $ is a volume form on $\mathfrak g$, 
for every simple $(n-p,0)$-form $\psi$ on $\mathfrak g$.
This follows from the fact that for every differential forms $\alpha$ and $\beta$  on $X$ we have
$$
(\alpha_{\nu} \wedge \beta)_{\nu} = \alpha_{\nu}  \wedge \beta_{\nu}.
$$
In fact, using $\psi=\psi_{\nu}$ we find that $i^{\pt{n-p}^2}\,\Omega_{\nu} \wedge\psi\wedge\bar\psi$ is a volume form, as it is the integral on $X$ of $i^{\pt{n-p}^2}\,\Omega\wedge\psi\wedge\bar\psi$, positive by hypothesis.
\end{proof}}}

We now state some general restriction to the existence of a $p$-K\"ahler structure on a Lie algebra.
{From now on $\mathfrak g^{1,0}$ (respectively $\mathfrak g^{0,1}$) will denote the $i$-eigenspace (respectively $-i$-eigenspace) of $J$ as an endomorphism of $\mathfrak g^*$.}

\begin{proposition}\label{jinvideal}
    If a $p$-K\"ahler Lie algebra $(\mathfrak g,J, \Omega)$  {of complex dimension $n \geq 3$}, with $p<n-1$, admits a closed non-zero $(1,0)$-form $\alpha$, then it has a $p$-K\"ahler $J$-invariant ideal of codimension 2.
\end{proposition}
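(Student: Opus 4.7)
The natural candidate is $\mathfrak{h} := \ker \alpha$, viewed as the real kernel of the $\C$-valued functional $\alpha$ on $\mathfrak g$. Since $\alpha$ is a $(1,0)$-form we have $\alpha(JX) = i \alpha(X)$, so $\mathfrak h$ is automatically $J$-invariant of real codimension $2$. The plan is first to verify that $\mathfrak h$ is an ideal, and then to check that the restriction $\Omega_{\mathfrak h} := \iota^* \Omega$ along $\iota : \mathfrak h \hookrightarrow \mathfrak g$ is a $p$-K\"ahler form on $\mathfrak h$.

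The ideal property is immediate from the Chevalley--Eilenberg formula $d\alpha(X,Y) = -\alpha([X,Y])$ together with $d\alpha = 0$, which force $[\mathfrak g, \mathfrak g] \subseteq \mathfrak h$. Closedness of $\Omega_{\mathfrak h}$ is likewise automatic: restriction of forms to a Lie subalgebra commutes with the Chevalley--Eilenberg differential, so $d_{\mathfrak h}\Omega_{\mathfrak h} = \iota^* d\Omega = 0$; reality and type $(p,p)$ for the induced complex structure are also preserved under restriction.

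The only real content is transversality of $\Omega_{\mathfrak h}$ on the complex $(n-1)$-dimensional Lie algebra $\mathfrak h$. Given a nonzero simple $(n-1-p,0)$-form $\psi = \mu_1 \wedge \cdots \wedge \mu_{n-1-p}$ on $\mathfrak h$, I would choose a $J$-invariant $2$-dimensional complement of $\mathfrak h$ in $\mathfrak g$, use it to lift each $\mu_j$ to a $(1,0)$-form $\tilde\mu_j$ on $\mathfrak g$ annihilating the complement, and form the simple $(n-p,0)$-form $\eta := \alpha \wedge \tilde\mu_1 \wedge \cdots \wedge \tilde\mu_{n-1-p}$ on $\mathfrak g$. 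Transversality of $\Omega$ then guarantees that $i^{(n-p)^2}\,\Omega \wedge \eta \wedge \bar\eta$ is a positive volume form on $\mathfrak g$.

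The main obstacle is a straightforward but tedious sign bookkeeping: one rearranges $i^{(n-p)^2}\,\Omega \wedge \eta \wedge \bar\eta$ into the product $(i \alpha \wedge \bar\alpha) \wedge \bigl(i^{(n-1-p)^2}\,\Omega_{\mathfrak h} \wedge \psi \wedge \bar\psi\bigr)$, where $\Omega_{\mathfrak h}$ and $\psi$ are transported back to $\mathfrak g$ via the chosen splitting. The key observation making this work is that wedge with $\alpha \wedge \bar\alpha$ kills every component of $\Omega$ containing $\alpha$ or $\bar\alpha$, so only $\Omega_{\mathfrak h}$ contributes. Since $i \alpha \wedge \bar\alpha$ is a positive volume form on the complement of $\mathfrak h$, positivity on $\mathfrak g$ forces positivity of $i^{(n-1-p)^2}\,\Omega_{\mathfrak h} \wedge \psi \wedge \bar\psi$ on $\mathfrak h$, which is precisely the transversality of $\Omega_{\mathfrak h}$. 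The hypothesis $p < n-1$ enters at this point: it ensures $n - 1 - p \geq 1$, so that nonzero simple $(n-1-p, 0)$-forms $\psi$ genuinely exist on $\mathfrak h$ and the transversality condition to be tested is nontrivial.
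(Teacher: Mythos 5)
Your proposal is correct and follows essentially the same route as the paper: the ideal is the ($J$-invariant, codimension-$2$) kernel of $\alpha$, closedness of the restriction follows from compatibility of restriction with the Chevalley--Eilenberg differential (the paper phrases this via the decomposition $d\beta = \diffh\beta + \tilde d\beta$ with $\tilde d\beta \in I(\alpha,\bar\alpha)$), and transversality is obtained exactly as you describe, by wedging $\Omega$ with $i\alpha\wedge\bar\alpha\wedge i\mu_1\wedge\bar\mu_1\wedge\cdots\wedge i\mu_{n-1-p}\wedge\bar\mu_{n-1-p}$ and observing that only the $\Omega_{\mathfrak h}$ component survives. The sign bookkeeping you defer is already absorbed into the identity $i^{k^2}\psi\wedge\bar\psi = i\mu_1\wedge\bar\mu_1\wedge\cdots\wedge i\mu_k\wedge\bar\mu_k$ for simple $\psi$, so nothing is missing.
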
  

\begin{proof}
    Let   $\{ \alpha^1, \ldots \alpha^n \} $ be  a basis  of $\mathfrak g^{1,0} $ such that $\alpha^1:= \alpha$.
    Consider its dual basis  $\{Z_1, \ldots, Z_n \} $ .
    Consider a basis $\pg{e_1,\dots,e_{2n}}$ of $\mathfrak g$ such that $Z_1=e_1-i\,e_2$.
    Clearly, $Je_1=e_2$ and the dual elements of $e_1,e_2$ in $\mathfrak g^*$ are closed differential 1-forms on $\mathfrak g$, so that the subspace $\mathfrak h:= \mbox{span}\pg{ e_3\dots,e_{2n}}$ is a $J$-invariant ideal of $\mathfrak g$ and $\mathfrak h^{1,0}$ is generated by ${\alpha^2,\dots,\alpha^{n}}$. 
    Let us denote with $\diffh$ the exterior derivative of $\mathfrak h$.
    Then, for every form $\beta\in\Lambda_{\mathfrak h}$ one has that $d\beta=\diffh\beta+\tilde d\beta$, with $\tilde d\beta\in I\pt{\alpha^1,\alpha^{\bar 1} }$.
    Let $\Omega$ be a $p$-K\"ahler structure  on $(\mathfrak g, J)$.
    We will prove that its restriction $\Omega_{\mathfrak h}\in\lambdahh pp$ to $\mathfrak h$ is a $p$-K\"ahler structure on $(\mathfrak h, J \vert_{\mathfrak h}$).  
    We can write 
    \begin{equation*}
    \Omega=\Omega_{\mathfrak h}+\alpha^1\wedge\eta +\alpha^{\bar1}\wedge\bar\eta +i\al11\wedge\vartheta,
    \end{equation*}
    with $\eta\in\lambdahh {p-1}p$, $\vartheta\in\lambdahh {p-1}{p-1}$, $\Omega_{\mathfrak h}$ and $\vartheta$ real. Therefore
    \begin{equation*}
    \begin{split}
        d\Omega&=d\Omega_{\mathfrak h}-\alpha^1\wedge d\eta -\alpha^{\bar1}\wedge d\bar\eta +i\al11\wedge d\vartheta\\
            &=\diffh\Omega_{\mathfrak h}+\tilde d\Omega_{\mathfrak h}-\alpha^1\wedge d\eta -\alpha^{\bar1}\wedge d\bar\eta +i\al11\wedge d\vartheta,
    \end{split}
    \end{equation*}
    with $\diffh\Omega_{\mathfrak h}\in\lambdah{2p+1}$ and $d\Omega-\diffh\Omega_{\mathfrak h}\in I\pt{\alpha^1,\alpha^{\bar 1} }$, so that if $\Omega$ is $d$-closed, $\Omega_{\mathfrak h}$ must be  $\diffh$-closed.
    It remains to prove that $\Omega_{\mathfrak h}$ is transverse, namely that,  for all $$\phi=i\mu_1\wedge\overline{\mu_1}\wedge\dots\wedge i\mu_{n-1-p}\wedge\overline{\mu_{n-1-p}},  \quad \mu_j\in\mathfrak h^{1,0},$$  the $(n-1,n-1)$-form $\Omega_{\mathfrak h}\wedge \phi$ is a volume form on $\mathfrak h$.
    Since $\Omega$ is transverse, $\Omega\wedge i\al11\wedge\phi$ is a volume form on $\mathfrak g$.
    One easily sees that
    $$
    \Omega\wedge i\al11\wedge\phi=\Omega_{\mathfrak h}\wedge i\al11\wedge\phi,
    $$
    yielding that $\Omega_{\mathfrak h}\wedge \phi$ is a volume form on $\mathfrak h$, as wanted.
\end{proof}

\section{$p$-K\"ahler  structures on nilmanifolds}\label{secNilmanifolds}

We will now discuss the case where $X$ is a {nilmanifold}, i.e. a compact  quotient $\Gamma\backslash G$ of a simply connected nilpotent  Lie group $G$ by a lattice $\Gamma$ {endowed with am invariant complex structure.}
Lemma \ref{symmetrization} allows us to restrict to the study of $p$-K\"ahler structures on the nilpotent Lie algebra  $\mathfrak g$ of $G$.
From \cite{Salamon}, we know that for every complex structure $J$ on a nilpotent Lie algebra   $\mathfrak g$, there exists a basis $\{\alpha^1, \ldots, \alpha^n\} $ of $\mathfrak g^{1,0}$ such that for all $j$,
\begin{equation*}
    d\alpha^{j+1}\in I\pt{\alpha^1,\dots,\alpha^j}.
\end{equation*}
In particular, $d\alpha^1=0$, so Proposition \ref{jinvideal} holds.

We recall that given a complex structure $J$ on a real nilpotent Lie algebra $\mathfrak g$ of dimension $2n$, one can define the {ascending series adapted to $J$} as follows
\begin{align*}
{\mathfrak a}_0(J)&=\pg{0},	\\
{\mathfrak a}_{k}(J)&=\pg{X\in\mathfrak g\colon \pq{X,\mathfrak g}\subseteq {\mathfrak a}_{k-1}(J),\pq{JX,\mathfrak g}\subseteq {\mathfrak a}_{k-1}(J)}, 	\quad\text{for }k\geq1.
\end{align*}
Then $J$ is said to be:
\begin{itemize}
    \item \textit{strongly non-nilpotent (SnN)} if ${\mathfrak a}_{1}(J)=\pg{0}$;
    \item \textit{quasi-nilpotent} if ${\mathfrak a}_{1}(J)\neq\pg{0}$. In this case, the ascending series adapted to $J$ stabilizes, namely there exists a positive integer $t$ such that ${\mathfrak a}_{t}(J)={\mathfrak a}_{l}(J)$ for all $l\geq t$ and we can distinguish between two subcases:
    \begin{itemize}
        \item $J$ is \textit{weakly non-nilpotent} if ${\mathfrak a}_{t}(J)\neq\mathfrak{g}$;
        \item $J$ is \textit{nilpotent} if ${\mathfrak a}_{t}(J)=\mathfrak{g}$, or equivalently, if  there is a basis $ \{ \alpha^1, \ldots, \alpha^n \}$ of $\mathfrak{g}^{1,0}$ satisfying 
\begin{equation*}
\begin{cases} 
        d\alpha^1   =0,   \\
        d\alpha^{j} \in\Lambda^2\ps{\alpha^1,\dots,\alpha^{j-1}}{\alb{1},\dots,\alb{j-1}},& j=2,\dots,n.
\end{cases}
\end{equation*}
    \end{itemize}
\end{itemize}

The following result is a consequence of Proposition \ref{obstructpK} and it gives an obstruction to the existence of {$p$-K\"ahler} structures.

\begin{proposition}[\cite{Sferruzza-Tardini}]\label{stnilp}
    Let $\mathfrak{g}$ be a nilpotent Lie algebra of complex dimension $n$ endowed with a nilpotent complex structure $J$.
    Given a basis $\{\alpha^1, \ldots, \alpha^n \}$ of $\mathfrak{g}^{1,0}$, let $t$ be the positive integer such that
    \begin{equation*}
        d\alpha^j=0,\quad\text{for }j=1,\dots,t,  \quad  {\text{and}} \quad 
        d\alpha^j\neq0\quad\text{for }j=t+1,\dots,n.
    \end{equation*}
    Then, there are no $(n-t)$-K\"ahler structures  on $(\mathfrak{g},J)$.
\end{proposition}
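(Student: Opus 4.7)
The plan is to apply Proposition \ref{obstructpK} with $p=n-t$; thus I need to exhibit a $(2t-1)$-form $\beta$ on $\mathfrak g$ such that $(d\beta)^{t,t}$ is a nonzero scalar multiple of $\psi\wedge\bar\psi$ for some simple $(t,0)$-form $\psi$ (a single-term sum trivially has its coefficient of a definite sign, and rescaling $\beta$ by a complex constant makes that coefficient real). Although Proposition \ref{obstructpK} is stated for compact complex manifolds, its Stokes-type proof passes to the invariant setting: on a nilpotent, hence unimodular, Lie algebra the Chevalley--Eilenberg differential vanishes in top degree, so the identity $\Omega\wedge d\beta=0$ that powers the obstruction is still available whenever $\Omega\in\Lambda^{n-t,n-t}\mathfrak g^*$ is $d$-closed.

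The driving observation is that $\alpha^{t+1}$ is the first basis form with nonzero differential, and by integrability of $J$ together with the nilpotency hypothesis,
\[
d\alpha^{t+1}=\omega_{2,0}+\omega_{1,1}\in(\Lambda^{2,0}\oplus\Lambda^{1,1})\cap\Lambda^2\ps{\alpha^1,\dots,\alpha^t}{\bar\alpha^1,\dots,\bar\alpha^t}.
\]
Since $d\alpha^{t+1}\neq 0$, at least one of $\omega_{2,0}$ and $\omega_{1,1}$ is nonzero, which naturally suggests a two-case construction of $\beta$.

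If $\omega_{1,1}\neq 0$, fix indices $j,k\in\{1,\dots,t\}$ with the coefficient $B_{jk}$ of $\alpha^j\wedge\bar\alpha^k$ in $\omega_{1,1}$ nonzero, and set
\[
\beta=\alpha^{t+1}\wedge\alpha^1\wedge\cdots\widehat{\alpha^j}\cdots\wedge\alpha^t\wedge\bar\alpha^1\wedge\cdots\widehat{\bar\alpha^k}\cdots\wedge\bar\alpha^t,
\]
a form of type $(t,t-1)$. Since every factor of $\beta$ other than $\alpha^{t+1}$ is closed, $d\beta=\pm d\alpha^{t+1}\wedge(\cdots)$, and only the $B_{jk}$-summand of $\omega_{1,1}$ survives the wedge; this gives $(d\beta)^{t,t}=\pm B_{jk}\,\alpha^1\wedge\cdots\wedge\alpha^t\wedge\bar\alpha^1\wedge\cdots\wedge\bar\alpha^t$. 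If instead $\omega_{1,1}=0$ and $\omega_{2,0}\neq 0$, pick $A_{jk}\neq 0$ with $j<k$ and take
\[
\beta=\alpha^{t+1}\wedge\alpha^1\wedge\cdots\widehat{\alpha^j}\cdots\widehat{\alpha^k}\cdots\wedge\alpha^t\wedge\bar\alpha^1\wedge\cdots\wedge\bar\alpha^t,
\]
of type $(t-1,t)$; then $d\beta=\pm\omega_{2,0}\wedge(\cdots)$ is already of pure type $(t,t)$, and only the $A_{jk}$-summand contributes, producing $(d\beta)^{t,t}=\pm 2A_{jk}\,\alpha^1\wedge\cdots\wedge\alpha^t\wedge\bar\alpha^1\wedge\cdots\wedge\bar\alpha^t$.

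In both cases, with $\psi:=\alpha^1\wedge\cdots\wedge\alpha^t$ (which is simple), $(d\beta)^{t,t}$ is a nonzero scalar multiple of $\psi\wedge\bar\psi$; rescaling $\beta$ by the reciprocal of that scalar yields the one-term decomposition required by Proposition \ref{obstructpK}, ruling out any $(n-t)$-K\"ahler structure on $(\mathfrak g, J)$. The main technical difficulty I foresee is the $(p,q)$-bookkeeping---verifying that in each case exactly one term survives the prescribed wedge and that the signs line up so that the surviving coefficient is genuinely nonzero---rather than any conceptual obstacle; the core idea is the integrability-forced dichotomy between the $(2,0)$ and $(1,1)$ components of $d\alpha^{t+1}$.
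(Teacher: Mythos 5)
Your proof is correct: the paper itself gives no proof of this proposition (it is quoted from Sferruzza--Tardini) but presents it precisely as ``a consequence of Proposition \ref{obstructpK}'', which is exactly the route you take, and your construction of $\beta$ from the first non-closed generator $\alpha^{t+1}$ --- splitting into the cases $\omega_{1,1}\neq 0$ and $\omega_{1,1}=0,\ \omega_{2,0}\neq 0$, and checking that exactly one monomial of $d\alpha^{t+1}$ survives the wedge with the remaining closed factors --- is sound (note that when $t=1$ the second case cannot occur, so the argument degenerates gracefully to $\beta=\alpha^{2}$). Your two side remarks are also the right ones and correctly handled: rescaling $\beta$ by a complex constant to make the single surviving coefficient real and positive, and using unimodularity of the nilpotent Lie algebra (so that $d$ vanishes on $(2n-1)$-forms, giving $\Omega\wedge d\beta=0$) to transfer the obstruction of Proposition \ref{obstructpK} from the compact-manifold setting to the level of $(\mathfrak g,J)$.
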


\subsection{Quasi-nilpotent complex structures}\label{secQNilp}

We will now consider quasi-nilpotent complex structures, namely the case where the center $\zeta$ of $\mathfrak g$ has a $J$-invariant non-trivial subspace.
{We recall the following}

\begin{definition}[\cite{LPhD}]
Let  $\frak g$ be   {a nilpotent} Lie algebra endowed with a {quasi-nilpotent} complex structure $J$ and 
$\mathfrak b$ be a $J$-invariant subspace of real dimension $2$. 
If $\mathfrak k$ is a nilpotent Lie algebra of {real dimension $2(n-1)$} endowed  with a complex structure $K$ such that $(\mathfrak k,K)$ is isomorphic to $(\mathfrak g/\mathfrak b,\rest J{\mathfrak g/\mathfrak b})$,  the pair $(\mathfrak g,J)$ is called a $\mathfrak b$-extension of $(\mathfrak k,K)$.
\end{definition}

{We can prove the following}.

\begin{proposition}\label{quasinilppK}
    If $\mathfrak g$ is a nilpotent Lie algebra of real  dimension $2n\geq6$ endowed with a quasi-nilpotent complex structure $J$ and admitting a  $p$-K\"ahler structure, then {$(\mathfrak g, J)$} is the $\mathfrak b$-extension of a $(p-1)$-K\"ahler nilpotent Lie algebra.
\end{proposition}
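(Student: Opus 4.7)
The plan is to construct a $(p-1)$-K\"ahler form on the quotient $\mathfrak k := \mathfrak g/\mathfrak b$ directly as a component of $\Omega$. Since $J$ is quasi-nilpotent, $\mathfrak a_1(J)$ is a non-zero $J$-invariant subspace of the centre of $\mathfrak g$, so I would take $\mathfrak b \subset \mathfrak a_1(J)$ of real dimension $2$ and $J$-invariant. Pick $X_1 \in \mathfrak b$ non-zero and set $Z_1 := \tfrac{1}{2}(X_1 - i JX_1)$, a $(1,0)$-vector in $\mathfrak g_{\mathbb C}$. Complete it to a frame of $(1,0)$-vectors $\{Z_1, \ldots, Z_n\}$ such that $\{Z_2, \ldots, Z_n\}$ projects to a frame of $(1,0)$-vectors on $\mathfrak k$, and let $\{\alpha^1, \ldots, \alpha^n\} \subset \mathfrak g^{1,0}$ denote the dual coframe, so $\alpha^2, \ldots, \alpha^n$ pull back from a basis of $\mathfrak k^{1,0}$.

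The key structural observation is that $Z_1$ and $\bar Z_1$ are central in $\mathfrak g_{\mathbb C}$, so by $d\alpha^j(X,Y) = -\alpha^j([X,Y])$ each $d\alpha^j$ and $d\bar\alpha^j$ lies in $\Lambda^2\langle \alpha^2, \ldots, \alpha^n, \bar\alpha^2, \ldots, \bar\alpha^n\rangle$, i.e.\ has no $\alpha^1$ or $\bar\alpha^1$ factor. Hence $d$ preserves the subalgebra of $\mathfrak k$-horizontal forms and coincides there with $d_{\mathfrak k}$. I would then decompose
\[
\Omega = A + \alpha^1\wedge\mu + \bar\alpha^1\wedge\bar\mu + i\,\alpha^1\wedge\bar\alpha^1\wedge\vartheta,
\]
with $A,\mu,\vartheta$ horizontal; reality of $\Omega$ together with $\overline{\alpha^1\wedge\bar\alpha^1} = -\alpha^1\wedge\bar\alpha^1$ forces $A$ and $\vartheta$ real, with $\vartheta$ of bidegree $(p-1,p-1)$. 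This $\vartheta$ is my candidate $(p-1)$-K\"ahler form on $\mathfrak k$.

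For closedness, I collect the $\alpha^1\wedge\bar\alpha^1$ coefficient of $d\Omega = 0$: by the observation above the only term producing such a factor is $i\,\alpha^1\wedge\bar\alpha^1\wedge d\vartheta$, forcing $d_{\mathfrak k}\vartheta = 0$. For transversality, let $\psi$ be a simple $(n-p,0)$-form on $\mathfrak k$, lifted horizontally to $\mathfrak g$. Then $\psi\wedge\bar\psi$ has no $\alpha^1, \bar\alpha^1$ factor, so in $\Omega\wedge\psi\wedge\bar\psi$ only the piece $i\,\alpha^1\wedge\bar\alpha^1\wedge\vartheta$ yields a non-zero top form, giving
\[
i^{(n-p)^2}\,\Omega\wedge\psi\wedge\bar\psi \;=\; (i\,\alpha^1\wedge\bar\alpha^1) \wedge \bigl(i^{(n-p)^2}\,\vartheta\wedge\psi\wedge\bar\psi\bigr).
\]
Since $i\,\alpha^1\wedge\bar\alpha^1$ is a positive $2$-form on $\mathfrak b^*$, positivity of the left-hand side on $\mathfrak g$ (by transversality of $\Omega$) is equivalent to positivity of $i^{(n-p)^2}\,\vartheta\wedge\psi\wedge\bar\psi$ on $\mathfrak k$, i.e.\ transversality of $\vartheta$. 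Thus $\vartheta$ is a $(p-1)$-K\"ahler structure on $(\mathfrak k,K)$, and $(\mathfrak g, J)$ is the required $\mathfrak b$-extension. The main delicate step will be the $i$-bookkeeping; it works out cleanly because the explicit $i$ demanded by the reality of $\Omega$ in the $\alpha^1\wedge\bar\alpha^1$ term is precisely the factor needed so the transversality prefactor $i^{(n-p)^2}$ transports unchanged from $\mathfrak g$ to $\mathfrak k$.
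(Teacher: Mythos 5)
Your proposal is correct and follows essentially the same route as the paper: the same decomposition of $\Omega$ relative to the central $J$-invariant plane $\mathfrak b\subset\zeta\cap J\zeta$ (you place it in the index $1$ slot, the paper in the index $n$ slot), with the $(p-1)$-K\"ahler form on $\mathfrak k=\mathfrak g/\mathfrak b$ extracted as the coefficient of $i\,\alpha^1\wedge\alpha^{\bar 1}$, closedness read off from that component of $d\Omega$, and transversality obtained by the same dimensional-vanishing argument. The only cosmetic difference is that you construct $\mathfrak b$ and the quotient directly where the paper invokes the $\mathfrak b$-extension result of Latorre's thesis.
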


\begin{proof}
    In \cite{LPhD}, it was proven that for every pair $({\mathfrak g} ,J)$, where $\mathfrak g$ is a nilpotent Lie algebra and $J$ is quasi-nilpotent complex structure, there exists a $2$-dimensional $J$-invariant subspace $\mathfrak b$ of ${\mathfrak a}_1(J)=\zeta\cap J\zeta$, where $\zeta$ is the center of $\mathfrak g$, such that $(\mathfrak g,J)$ is the $\mathfrak b$-extension of some nilpotent Lie algebra  $\mathfrak k$ of  codimension 2   endowed with a complex structure $K$.
    We can choose a basis $\{\alpha^1, \dots, \alpha^n \}$ of $\mathfrak g^{1,0} $ such that $\mathfrak b^{1,0}$ is generated by $\alpha^n$.
    In this way, $\pg{\alpha^1,\dots, \alpha^{n-1}}$ is a basis of $\mathfrak k^{1,0} $  and     $d\alpha^j\in\Lambda^2_{\mathfrak k }$, for all $j$.
    Let $\Omega$ be a $p$-K\"ahler structure  on $\mathfrak g$.
   Then there exist $\Omega_{\mathfrak k}\in\lambdakk kpp$ and $\omega\in\lambdakk k{p-1}{p-1}$ real, $\eta\in\lambdakk k{p-1}p$ such that
   \begin{equation}\label{OmegaRestrict}
    \Omega=\Omega_{\mathfrak k}+\eta \wedge\alpha^n+\bar\eta \wedge\alpha^{\bar n}+\omega \wedge i\al nn.
   \end{equation}
    We will prove that $\omega $ is a $(p-1)$-K\"ahler form for $(\mathfrak k,K)$.
    {The  closure of the $(p-1, p-1)$-form $\omega$} follows from the fact that  $\Omega$ is closed and
    \begin{equation}\label{diffpform}
    \begin{split}
        0=d\Omega=&d\Omega_{\mathfrak k}-\eta\wedge d\alpha^n-\bar\eta \wedge d\alpha^{\bar n} \\
            &+\pt{d\eta +i\omega \wedge d\alpha^{\bar n}}\wedge\alpha^n +\pt{d\bar\eta +i\omega \wedge d\alpha^n}\wedge\alpha^{\bar n}   \\
            &+d\omega \wedge i\al nn.
    \end{split}    
    \end{equation}
    To prove that $\omega $ is transverse, fix  {a simple form} $\psi\in\Lambda^{(n-1)-(p-1),0}=\Lambda^{n-p,0}$ .
    For dimensional reasons, $\Omega_{\mathfrak k}\wedge\psi\wedge\bar\psi=\eta\wedge\psi\wedge\bar\psi=0$, giving
    $$
    i^{\pt{n-p}^2}\Omega\wedge\psi\wedge\bar\psi=i^{\pt{n-p}^2}{\omega }\wedge\psi\wedge\bar\psi\wedge i\al nn.
    $$
    This is a volume form on $\mathfrak g$ because $\Omega$ is transverse, so $i^{\pt{n-p}^2}{\omega }\wedge\psi\wedge\bar\psi$ is a volume form on $\mathfrak k$.
\end{proof}

{As a consequence we have the following}

\begin{corollary}\label{QNilp2k}
{Let $\mathfrak g$ be  a  nilpotent Lie algebra  of real dimension $2n$ endowed with a quasi-nilpotent complex structure  $J$. If $(\frak g,  J)$  admits a $2$-K\"ahler structure}, then  $(\mathfrak g, J)$  must be the $\mathfrak b$-extension of the $2(n-1)$-dimensional abelian Lie algebra.
    Moreover, the complex structure $J$ has to be nilpotent.
\end{corollary}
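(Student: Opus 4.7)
The plan is to apply Proposition \ref{quasinilppK} with $p=2$ as the starting point: if $(\mathfrak g, J)$ admits a $2$-K\"ahler structure, then $(\mathfrak g, J)$ is the $\mathfrak b$-extension of a nilpotent Lie algebra $(\mathfrak k, K)$ of real dimension $2(n-1)$ that carries a $1$-K\"ahler structure. Note that the dimension hypothesis $2n\geq 6$ of Proposition \ref{quasinilppK} is automatic, since for $n=2$ the range $1\leq p<n$ does not contain $p=2$.

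Next, I would recall that a $1$-K\"ahler structure is exactly a K\"ahler form (this is already noted in the text right after the definition of $p$-K\"ahler structure). Therefore $(\mathfrak k, K)$ is a K\"ahler nilpotent Lie algebra. By the classical theorem of Benson--Gordon (equivalently, Hasegawa at the nilmanifold level), a nilpotent Lie algebra admitting a K\"ahler structure is abelian. Hence $\mathfrak k$ is the $2(n-1)$-dimensional abelian Lie algebra, proving the first assertion.

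For the nilpotency of $J$, I would argue directly from the definition of the $J$-adapted ascending series. By the choice of $\mathfrak b$ in the proof of Proposition \ref{quasinilppK} one has $\mathfrak b\subseteq \zeta\cap J\zeta=\mathfrak a_1(J)$. Since $\mathfrak g/\mathfrak b\cong\mathfrak k$ is abelian, every bracket $[X,Y]$ for $X,Y\in\mathfrak g$ lies in $\mathfrak b$, so for each $X\in\mathfrak g$ both $[X,\mathfrak g]$ and $[JX,\mathfrak g]$ are contained in $\mathfrak b\subseteq\mathfrak a_1(J)$. This gives $\mathfrak a_2(J)=\mathfrak g$, so $J$ is nilpotent (indeed of nilpotency step at most $2$).

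There is no real obstacle here beyond identifying the right ingredients; the only step that requires invoking a nontrivial external result is the Benson--Gordon rigidity theorem for K\"ahler nilpotent Lie algebras, and the rest reduces to tracking what the $\mathfrak b$-extension does to the ascending series adapted to $J$.
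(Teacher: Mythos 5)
Your proposal is correct and follows essentially the same route as the paper: apply Proposition \ref{quasinilppK} with $p=2$, identify the $1$-K\"ahler condition on $\mathfrak k$ with the K\"ahler condition, and invoke the Benson--Gordon/Hasegawa rigidity result (which the paper uses implicitly without naming it) to conclude $\mathfrak k$ is abelian. The only cosmetic difference is in the last step: you verify nilpotency of $J$ via the ascending series $\mathfrak a_2(J)=\mathfrak g$, whereas the paper reads it off the complex structure equations $d\alpha^j=0$ for $j\leq n-1$, $d\alpha^n\in\Lambda^2_{\mathfrak k}$; these are the two equivalent characterizations of nilpotent complex structures already recorded in Section \ref{secNilmanifolds}, so the arguments coincide in substance.
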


\begin{proof}
    The first part of the statement is given by Proposition \ref{quasinilppK} for $p=2$.
    It follows that the  {complex structure equations  of $(\mathfrak g, J)$ must be} 
    \begin{equation*}
    \begin{cases}
        d\alpha^j=0,    &   j=1,\dots,n-1,   \\
        d\alpha^n\in\Lambda^2_{\mathfrak k},
    \end{cases}
    \end{equation*}
    with $\Lambda^2_{\mathfrak k}=\Lambda^2\ps{\alpha^1,\dots,\alpha^{n-1}}{\alb{1},\dots,\alb{n-1}}$, meaning that $J$ {has to be} nilpotent.
\end{proof}

\subsection{$2$-K\"ahler structures on  nilmanifolds of real  dimension $8$}

In  real dimension  $8$ we can actually prove that $2$-K\"ahler nilmanifolds endowed with an invariant complex structures {must be}   K\"ahler.

\begin{proposition}
    A  (non-abelian) $8$-dimensional nilpotent Lie algebra $\mathfrak g$  endowed with a quasi-nilpotent complex structure  $J$  does not admit $2$-K\"ahler structures.
\end{proposition}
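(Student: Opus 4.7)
The plan is to argue by contradiction, first using Corollary \ref{QNilp2k} to rigidify the structure of $(\mathfrak g, J)$, then extracting a Lefschetz-type obstruction from the closedness of the $2$-K\"ahler form.

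Assuming $(\mathfrak g, J)$ admits a $2$-K\"ahler structure $\Omega$, Corollary \ref{QNilp2k} forces $J$ to be nilpotent and $(\mathfrak g, J)$ to be the $\mathfrak b$-extension of the $6$-dimensional abelian Lie algebra $\mathfrak k$. I would then choose a basis $\{\alpha^1, \alpha^2, \alpha^3, \alpha^4\}$ of $\mathfrak g^{1,0}$ with $\mathfrak b^{1,0} = \langle \alpha^4 \rangle$, so that the structure equations become $d\alpha^j = 0$ for $j = 1, 2, 3$ and $d\alpha^4 \in \lambdak{k}{2}$. The non-abelian hypothesis translates to $d\alpha^4 \neq 0$, which is exactly what I aim to contradict.

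Next I would use the decomposition \eqref{OmegaRestrict} for $p = 2$,
\[
\Omega = \Omega_{\mathfrak k} + \eta \wedge \alpha^4 + \bar\eta \wedge \alb{4} + i\omega \wedge \al{4}{4},
\]
together with the fact (from the proof of Proposition \ref{quasinilppK}) that $\omega$ is the K\"ahler form of a Hermitian metric on $\mathfrak k$. Since all $\alpha^1, \alpha^2, \alpha^3$ and their conjugates are closed, every form in $\Lambda_{\mathfrak k}$ is automatically $d$-closed, so expanding $d\Omega = 0$ collapses to a short identity. Separating its summands according to whether they involve $\alpha^4$, $\alb{4}$, or neither extracts, in particular, the key relation
\[
\omega \wedge d\alpha^4 = 0 \quad \text{in } \lambdak{k}{4}.
\]

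The conceptual heart is the final step: on the Hermitian vector space $(\mathfrak k, \omega)$ of complex dimension $3$, the hard Lefschetz theorem on the exterior algebra gives that $L = \omega \wedge \cdot \colon \lambdak{k}{2} \to \lambdak{k}{4}$ is a linear isomorphism. Hence $d\alpha^4 = 0$, contradicting the non-abelianness. The main obstacle is recognizing that the entire closedness condition on $\Omega$ distills into this one Lefschetz-primitivity statement on the abelian part $\mathfrak k$; once that reduction is identified, everything else is routine bookkeeping from Corollary \ref{QNilp2k} and the bigrading of the exterior algebra.
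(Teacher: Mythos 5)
Your argument is correct, and its core is the same computation the paper performs; the genuine difference is that you avoid a case distinction. The paper first treats the case $\partial\alpha^4\neq 0$ separately: there $\partial\alpha^4$ is a $(2,0)$-form on the $3$-dimensional space $\mathfrak k^{1,0}$, hence simple, so the $3$-form $\alpha^4\wedge\overline{\partial\alpha^4}$ satisfies the hypotheses of Proposition \ref{obstructpK} and rules out $2$-K\"ahler structures. Only in the remaining case $d\alpha^4\in\Lambda^{1,1}_{\mathfrak k}$ does the paper expand $d\Omega=0$ via \eqref{OmegaRestrict}, observe that the terms $-\eta\wedge d\alpha^4-\bar\eta\wedge d\alpha^{\bar4}$ lie in $\Lambda^5_{\mathfrak k}$ while the remaining terms carry a factor of $\alpha^4$ or $\alpha^{\bar4}$, deduce $\omega\wedge d\alpha^4=0$, and invoke the positivity of $\omega$ exactly as you do. Your use of the pointwise hard Lefschetz isomorphism $L=\omega\wedge\cdot\colon\Lambda^2_{\mathfrak k}\to\Lambda^4_{\mathfrak k}$ (valid for any non-degenerate $2$-form on a $6$-dimensional real space, in particular for the positive definite $\omega$, and compatible with the bigrading) kills the whole of $d\alpha^4$ rather than only its $(1,1)$-component, which is what renders the paper's preliminary case superfluous; it also makes explicit the step the paper leaves terse when it says that $\omega\wedge d\alpha^4=0$ ``gives a contradiction''. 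What your streamlining costs is only the small extra linear-algebra input that $L$ is injective on all of $\Lambda^2_{\mathfrak k}$ and not merely on $\Lambda^{1,1}_{\mathfrak k}$; what it buys is a single uniform argument with no appeal to Proposition \ref{obstructpK}.
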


\begin{proof}
    We only have to prove the statement for $\mathfrak b$-extensions $(\mathfrak g,J)$ of the $6$-dimensional abelian Lie algebra $\mathfrak k$.
    Namely, we can suppose to have a basis $\{ \alpha^1, \ldots , \alpha^4 \}$ of $\mathfrak g^{1,0} $ such that $d\alpha^j=0$ for $j=1,2,3$ and $d\alpha^4\in\Lambda^2_{\mathfrak k} $.
    If $\del \, \alpha^4 \neq 0$, it is in particular a $(2,0)$-form on a space of complex dimension $3$, hence simple, so that $\alpha^4\wedge\overline{\del \, \alpha^4}$ is a $3$-form as in Proposition \ref{obstructpK} and $(\mathfrak g,J)$ is not $2$-K\"ahler.
    It remains to consider the case where $d\alpha^4\in\Lambda^{1,1}_{\mathfrak k}$.
    Suppose there exists a $2$-K\"ahler form $\Omega$ on $(\mathfrak g,J)$. 
    
    We can write $\Omega$ as in \eqref{OmegaRestrict} and because $\mathfrak k$ is abelian, \eqref{diffpform} reduces to
    \begin{equation*}
        0=d\Omega=-\eta\wedge d\alpha^4-\bar\eta\wedge d\alpha^{\bar 4}{+i\,\omega\wedge d\alpha^{\bar 4}}\wedge\alpha^4 +{i\,\omega\wedge d\alpha^4}\wedge\alpha^{\bar 4}.
    \end{equation*}
    Since $-\eta\wedge d\alpha^4-\bar\eta\wedge d\alpha^{\bar 4}\in\Lambda^5_{\mathfrak k} $, we have $\omega\wedge d\alpha^4=0$.
    We already proved that since $\Omega$ is transverse, $\omega$ is a transverse $(1,1)$-form, hence strongly positive, giving a contradiction.
\end{proof}

Strongly non-nilpotent complex structures on $8$-dimensional nilpotent Lie algebras were classified in   \cite{LPhD} and then refined in  \cite{luv1}.
In particular, {it turns out that}, depending on the {ascending type} of $\mathfrak{g}$, the admissible complex structures are divided into two families as follows.

\begin{proposition}\cite[Thm. 3.3]{luv1}\label{8snn}
Let $J$ be a strongly non-nilpotent complex structure on an $8$-dimensional nilpotent Lie algebra $\mathfrak g$.
Then, there exists a basis $\{ \alpha^1, \ldots, \alpha^4 \}$ of $\mathfrak{g}^{1,0}$ such that the complex structure equations are {either given by}
\item 
    \begin{equation}\label{cs8f1}
    \begin{cases}
    d\alpha^1={0},	\\
    d\alpha^2=\varepsilon\al11,	\\
    d\alpha^3=\alpha^{14}+\al14+a\al21+i\,\delta\,\varepsilon\,b\al12,	\\
    d\alpha^4=i\,\nu\al11+b\al22+i\,\delta\pt{\al13-\al31},	
    \end{cases}
    \end{equation}
    where $\delta=\pm1$, $(a,b)\in\R^2\setminus\pg{(0,0)}$, $a\geq0$ and the tuple $\pt{\varepsilon,\nu,a,b}$ is one of the following:
    $$
    (0,0,0,1), (0,0,1,0), (0,0,1,1), \pt{0,1,0,\frac{b}{\abs{b}}}, (0,1,1,b), (1,0,0,1), (1,0,1,\abs{b}), (1,1,a,b),
    $$
    
  \noindent   or given by 
    \begin{equation}\label{cs8f2}
    \begin{cases}
    d\alpha^1={0},	\\
    d\alpha^2=\alpha^{14}+\al14,	\\
    d\alpha^3=a\al11+\varepsilon\pt{\alpha^{12}+\al12-\al21}+i\,\mu\pt{\alpha^{24}+\al24},	\\
    d\alpha^4=i\,\nu\al11-\mu\al22+i\,b\pt{\al12-\al21}+i\pt{\al13-\al31},	
    \end{cases}
    \end{equation}
    where $a,b\in\R$, and the tuple $\pt{\varepsilon,\mu,\nu,a,b}$ is one of the following:
    $$
    (1,1,0,a,b), (1,0,1,a,b), (1,0,0,0,b), (1,0,0,1,b), (0,1,0,0,0), (0,1,0,1,0).
    $$
\end{proposition}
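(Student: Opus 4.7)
The plan is to carry out a systematic classification following the strategy developed in \cite{LPhD} for lower real dimensions. Since $J$ is strongly non-nilpotent, by definition $\mathfrak a_1(J)=\pg{0}$, so the center $\zeta$ of $\mathfrak g$ admits no $J$-invariant real $2$-plane. Combined with Salamon's existence of a basis $\pg{\alpha^1,\dots,\alpha^4}$ of $\mathfrak g^{1,0}$ with $d\alpha^{j+1}\in I\pt{\alpha^1,\dots,\alpha^j}$, this already constrains substantially the shape of the admissible complex structure equations.

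First I would enumerate the possible ascending types $\pt{\dim_{\R}\mathfrak a_k(J)}_{k\geq 0}$ compatible with $\dim_{\R}\mathfrak g=8$, $\mathfrak a_1(J)=\pg{0}$, and the nilpotency of $\mathfrak g$. Each quotient $\mathfrak a_k(J)/\mathfrak a_{k-1}(J)$ is $J$-invariant and hence even-dimensional, while the lower central series must terminate, so the admissible dimension sequences form a short finite list. One checks that these sequences group naturally into the two families of equations \eqref{cs8f1} and \eqref{cs8f2}, distinguished by whether $d\alpha^2$ already picks up the term $\varepsilon\al11$ from the bottom of the filtration or picks up the mixed term $\alpha^{14}+\al14$ coming from the top.

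For each ascending type I would then write the most general complex structure equations compatible with the type, leaving finitely many real parameters in the coefficients of $d\alpha^j$. Imposing $d^2\alpha^j=0$ for $j=1,\dots,4$ (equivalently, the Jacobi identity for the underlying real bracket) translates into a system of quadratic equations in these parameters. The residual freedom consists of the linear changes of basis of $\mathfrak g^{1,0}$ that preserve the $J$-adapted filtration; this is used to rescale, rotate and conjugate the $\alpha^j$ in order to eliminate redundant moduli and signs and reduce to a discrete list of normal forms. The tuples $\pt{\varepsilon,\nu,a,b}$ and $\pt{\varepsilon,\mu,\nu,a,b}$ then record precisely which inequivalent degenerate subcases survive, their values encoding whether a given normalizing transformation was available.

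The main obstacle is the case-by-case bookkeeping: enumerating the ascending types, solving the Jacobi system in each, and carefully treating the borderline cases where some coefficient vanishes so that a rescaling is unavailable and one has to pick an alternative normalizing transformation. This delicate step is where the refinement carried out in \cite{luv1} of the earlier classification \cite{LPhD} lies; once the finite list of ascending types has been tabulated, the remaining analysis is essentially mechanical but lengthy.
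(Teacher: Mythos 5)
This proposition is not proved in the paper at all: it is imported verbatim from \cite[Thm.\ 3.3]{luv1}, so there is no in-paper argument to compare against, and the relevant question is whether your outline would stand as a proof on its own. It would not. You have correctly identified the strategy that \cite{LPhD} and \cite{luv1} actually follow --- fix the ascending type adapted to $J$, write the most general structure equations compatible with it, impose $d^2=0$, and normalize by filtration-preserving changes of basis --- but every step that carries mathematical content is deferred. ``One checks that these sequences group naturally into the two families'' and ``the remaining analysis is essentially mechanical but lengthy'' are precisely where the theorem lives: the specific normal forms \eqref{cs8f1} and \eqref{cs8f2}, and above all the discrete lists of admissible tuples $\pt{\varepsilon,\nu,a,b}$ and $\pt{\varepsilon,\mu,\nu,a,b}$, are never derived. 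As written, the proposal is a plan for a proof, not a proof.

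Two concrete gaps in the plan itself. First, the enumeration of ascending types is not as short as you suggest unless you first establish the nontrivial structural fact (proved in \cite{LPhD}) that an $8$-dimensional nilpotent Lie algebra carrying a strongly non-nilpotent complex structure has one-dimensional center; without that input the case list does not collapse to the two families, and your stated dichotomy (whether $d\alpha^2$ acquires $\varepsilon\al11$ or $\alpha^{14}+\al14$) is an unverified guess at what distinguishes them. Second, the quotients $\mathfrak a_k(J)/\mathfrak a_{k-1}(J)$ need not interact with $J$ in the clean ``even-dimensional, $J$-invariant'' way you assert for $k\geq 2$ --- only $\mathfrak a_1(J)$ is $J$-invariant by construction --- so the combinatorial bound on ascending types requires a separate argument. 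If you intend to cite the result, cite it; if you intend to prove it, the case-by-case normalization must actually be carried out.
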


For both of the families {\eqref{cs8f1} and \eqref{cs8f2}}, we can find a $3$-form $\beta$ as in Proposition \ref{obstructpK} that gives us an obstruction to the existence of $2$-K\"ahler forms on $(\mathfrak g,J)$.
In the case of {the family \eqref{cs8f1},}  we can take $\beta=b\al{14}1-a\al{13}2$.
This gives
$$
d\beta=(a^2+b^2)\aldue{12}12,
$$
never zero because $(a,b)\in\R^2\setminus\pg{(0,0)}$.
If $J$ belongs {to the family \eqref{cs8f2}}  we choose $\beta=\al{14}1+\pt{1-\mu}\al{12}3$, giving
$$
d\beta=(\varepsilon-\varepsilon\mu-\mu)\aldue{12}12,
$$
where $\varepsilon,\mu\in\pg{0,1}$ and $\pt{\varepsilon,\mu}\neq\pt{0,0}$, hence $d\beta$ is a non-zero $(2,2)$-form as wanted.

\smallskip

We have proved the following.

\begin{theorem}\label{no2k8}
    A nilpotent Lie algebra $\mathfrak g$ of real dimension $8$ endowed with a complex structure {admits a  $2$-K\"ahler structure} if and only if it is abelian.
\end{theorem}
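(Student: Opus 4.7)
My plan is to combine the two structural cases for the complex structure $J$ on an $8$-dimensional nilpotent Lie algebra. Any complex structure on $\mathfrak g$ is, by definition, either quasi-nilpotent or strongly non-nilpotent, so I split the proof along this dichotomy after first disposing of the trivial implication.

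For the \emph{easy direction}, if $\mathfrak g$ is abelian then $d \equiv 0$, so any strongly positive $(2,2)$-form is automatically $d$-closed and transverse; for instance $\Omega = \omega^{2}$ for $\omega$ the fundamental form of any compatible Hermitian inner product produces a $2$-K\"ahler (indeed K\"ahler) structure. For the \emph{hard direction}, suppose $(\mathfrak g, J)$ admits a $2$-K\"ahler structure and assume $\mathfrak g$ is non-abelian.

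\emph{Quasi-nilpotent case.} I would invoke the proposition proved immediately above the theorem: a non-abelian $8$-dimensional nilpotent Lie algebra endowed with a quasi-nilpotent complex structure admits no $2$-K\"ahler structure. (The argument there uses Corollary \ref{QNilp2k} to reduce to a $\mathfrak b$-extension of the $6$-dimensional abelian Lie algebra with $d\alpha^4 \in \Lambda^{1,1}_{\mathfrak k}$, and then derives a contradiction from the transversality of the induced $(1,1)$-form $\omega$ on $\mathfrak k$.)

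\emph{Strongly non-nilpotent case.} Here I would apply the classification in Proposition \ref{8snn}, reducing to the two families of structure equations \eqref{cs8f1} and \eqref{cs8f2}. For each family I exhibit a real $3$-form $\beta \in \Lambda^3 \mathfrak g^*$ whose differential has a $(2,2)$-component of the shape required by Proposition \ref{obstructpK} — namely a nonzero real multiple of the single simple term $\alpha^{12\bar 1\bar 2}$, which is visibly strongly positive (or its negative). Concretely, for \eqref{cs8f1} I take $\beta = b\,\alpha^{14\bar 1} - a\,\alpha^{13\bar 2}$ and verify $d\beta = (a^{2}+b^{2})\,\alpha^{12\bar 1\bar 2}$, nonzero because $(a,b) \neq (0,0)$; for \eqref{cs8f2} I take $\beta = \alpha^{14\bar 1} + (1-\mu)\,\alpha^{12\bar 3}$ and verify $d\beta = (\varepsilon - \varepsilon\mu - \mu)\,\alpha^{12\bar 1\bar 2}$, nonzero because $(\varepsilon,\mu) \in \{0,1\}^{2}\setminus\{(0,0)\}$. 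By Proposition \ref{obstructpK} with $(n,p)=(4,2)$, no $2$-K\"ahler structure can exist. Combined with the quasi-nilpotent case, this contradicts the hypothesis, so $\mathfrak g$ must be abelian.

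The genuine work in this proof is all upstream: the classification of SnN structures in Proposition \ref{8snn} and the quasi-nilpotent proposition. The only potential obstacle within the proof itself is verifying the explicit computations of $d\beta$ for each of the fourteen admissible tuples listed in Proposition \ref{8snn} — but these are routine wedge-product calculations using the structure equations, and the point is that the chosen $\beta$ works uniformly across all parameter values in each family, with the coefficient of $\alpha^{12\bar 1\bar 2}$ reducing to an expression that cannot vanish under the stated constraints on the tuples.
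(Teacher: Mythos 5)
Your proposal is correct and follows essentially the same route as the paper: the quasi-nilpotent case is delegated to the proposition proved immediately before the theorem, and the strongly non-nilpotent case is settled by applying Proposition \ref{obstructpK} to the classification of Proposition \ref{8snn} with exactly the same test forms $\beta=b\,\alpha^{14\bar1}-a\,\alpha^{13\bar2}$ and $\beta=\alpha^{14\bar1}+(1-\mu)\,\alpha^{12\bar3}$. The only addition is your explicit treatment of the trivial direction (abelian implies $2$-K\"ahler via $\Omega=\omega^2$), which the paper leaves implicit.
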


This result can actually be generalized to any dimension, when $\mathfrak g$ is endowed with a quasi-nilpotent complex structure $J$.

\begin{theorem}\label{quasinilpNON2k}
    A non--abelian nilpotent Lie algebra  of  real dimension $2n\geq8$ endowed with a quasi-nilpotent complex structure cannot be $2$-K\"ahler.
\end{theorem}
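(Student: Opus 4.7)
The plan is to argue by induction on the complex dimension $n$. The base case $n=4$ (real dimension $8$) is precisely Theorem \ref{no2k8}, which already rules out non-abelian $2$-K\"ahler structures at that dimension for arbitrary complex structures, not just quasi-nilpotent ones.

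For the inductive step with $n\geq 5$, assume the statement at complex dimension $n-1$ and suppose, for contradiction, that a non-abelian quasi-nilpotent $(\mathfrak g,J)$ of real dimension $2n$ admits a $2$-K\"ahler form. By Corollary \ref{QNilp2k}, $J$ is nilpotent, and there is a basis $\{\alpha^1,\dots,\alpha^n\}$ of $\mathfrak g^{1,0}$ with $d\alpha^j=0$ for $j<n$ and $0\neq d\alpha^n\in\Lambda^2_{\mathfrak k}$, where $\mathfrak k^{1,0}:=\langle\alpha^1,\dots,\alpha^{n-1}\rangle$. The strategy is to invoke Proposition \ref{jinvideal} with a carefully chosen closed $(1,0)$-form $\alpha\in\mathfrak k^{1,0}$ satisfying $d\alpha^n\notin I(\alpha,\bar\alpha)$. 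Completing $\alpha$ to a basis $\tilde\alpha^1=\alpha,\tilde\alpha^2,\dots,\tilde\alpha^{n-1}$ of $\mathfrak k^{1,0}$ and setting $\tilde\alpha^n=\alpha^n$, the resulting $2$-K\"ahler $J$-invariant codimension-$2$ ideal $\mathfrak h$ has real dimension $2(n-1)\geq 8$ with structure equations $d_{\mathfrak h}\tilde\alpha^j=0$ for $j<n$ and $d_{\mathfrak h}\tilde\alpha^n\neq 0$ (the part of $d\alpha^n$ surviving the removal of the terms in $I(\alpha,\bar\alpha)$). Hence $(\mathfrak h,J|_{\mathfrak h})$ is non-abelian, $J|_{\mathfrak h}$ is nilpotent (in particular quasi-nilpotent) by Salamon's criterion, and the inductive hypothesis provides the contradiction.

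The main obstacle---and the only step that is not a direct application of earlier results---is producing such an $\alpha$. I would argue it as follows: if $d\alpha^n \in I(\langle\alpha^k\rangle,\overline{\langle\alpha^k\rangle})$ held for every $k=1,\dots,n-1$, then every monomial in the expansion of $d\alpha^n$ in the basis $\{\alpha^i\wedge\alpha^j,\ \alpha^i\wedge\bar\alpha^j,\ \bar\alpha^i\wedge\bar\alpha^j\}$ would need to involve either $\alpha^k$ or $\bar\alpha^k$ for each such $k$. Since each monomial uses only two indices $i,j\in\{1,\dots,n-1\}$, while $n-1\geq 4>2$, some index $k$ is always missed, so every coefficient of $d\alpha^n$ must vanish. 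This contradicts $d\alpha^n\neq 0$, yields the required $\alpha$, and closes the induction.
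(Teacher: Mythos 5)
Your proof is correct, and the inductive step takes a genuinely different route from the paper's. Both arguments share the same skeleton: induction on $n$ with Theorem \ref{no2k8} as base case, Corollary \ref{QNilp2k} to normalize the structure equations to $d\alpha^j=0$ ($j<n$), $0\neq d\alpha^n\in\Lambda^2_{\mathfrak k}$, and Proposition \ref{jinvideal} to pass to a $2$-K\"ahler codimension-$2$ ideal. The paper, however, always quotients along $\alpha^1$; the inductive hypothesis then only shows that the resulting ideal $\mathfrak h$ is \emph{abelian}, which leaves the residual possibility $d\alpha^n=\alpha^1\wedge\gamma_1+\alpha^{\bar1}\wedge\gamma_2+c\,\alpha^{1\bar1}$, and the bulk of the paper's proof is a further decomposition of $\Omega_{\mathfrak h}$ along $\alpha^n$ together with the positive definiteness of the transverse $(1,1)$-form $\omega$ to force $\gamma_1=\gamma_2=0$ and finally $d\alpha^n=0$. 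You sidestep that entire computation by choosing \emph{which} closed form to quotient by: since each monomial of $d\alpha^n$ involves at most two of the $n-1\geq 4$ indices, some closed basis form $\alpha^k$ satisfies $d\alpha^n\notin I(\alpha^k,\alpha^{\bar k})$, so the associated ideal $\ker\alpha^k\cap\ker\alpha^{\bar k}$ is still non-abelian, carries a nilpotent (hence quasi-nilpotent) complex structure and a $2$-K\"ahler form, and the inductive hypothesis yields an immediate contradiction. Your pigeonhole observation is sound (it is exactly where the hypothesis $n\geq 5$ in the inductive step enters), and the only price is that you use the explicit ideal constructed in the proof of Proposition \ref{jinvideal} rather than its bare statement; in exchange you avoid the paper's longest calculation, whose only extra dividend is the explicit identification of the surviving structure constants.
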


\begin{proof}
    We will prove the statement by induction on $n\geq4$.
    The base of the induction is true by Theorem \ref{no2k8}.
    Suppose the theorem is proved for every nilpotent Lie algebra  of dimension $2(n-1)$ and consider $(\mathfrak g,J)$, with $\mathfrak g$ nilpotent and $J$ quasi-nilpotent, admitting a $2$-K\"ahler structure.
    We want to prove that $\mathfrak g$ is abelian.
    {From Corollary \ref{QNilp2k} we know that the {complex} structure equations must be 
    \begin{equation}\label{StrEqQNilp2kn}
    \begin{cases}
        d\alpha^j=0,    &   j=1,\dots,n-1,   \\
        d\alpha^n\in\Lambda^2_{\mathfrak k }=\Lambda^2\ps{\alpha^1,\dots,\alpha^{n-1}}{\alb{1},\dots,\alb{n-1}}.
    \end{cases}
    \end{equation}
    for some basis $\pg{\alpha^1\dots\alpha^n} $ of $\mathfrak g^{1,0}$.}
    Fix the dual basis $\pg{Z_j,\overline{Z_j}}_{j=1}^n$ of $\mathfrak g_{\C}$ dual to $\pg{\alpha^j\alpha^{\bar j}}_{j=1}^n$ and consider the ideal $\mathfrak h$ of $\mathfrak g$ such that $\mathfrak h_{\C}=\text{span}\pg{Z_2,\dots, Z_n,\overline{Z_2},\dots, \overline{Z_n}}$.
    Proposition \ref{jinvideal} also holds so $\mathfrak h$, endowed with the complex structure $\tilde J:= J \vert_{\mathfrak h}$, must be $2$-K\"ahler as well.
    If $\tilde J$ is quasi-nilpotent, we can use the inductive hypothesis on $\mathfrak h$ to conclude that it is abelian.
    From \eqref{StrEqQNilp2kn} we know that the complex structure equations of $(\mathfrak h,\tilde J)$ are
    \begin{equation*}
        \diffh\alpha^j=0,\quad j=2,\dots,n-1,\quad\quad \diffh\alpha^n\in\Lambda^2_{\mathfrak h\cap\mathfrak k}=\Lambda^2\ps{\alpha^2,\dots,\alpha^{n-1}}{\alb{2},\dots,\alb{n-1}},
    \end{equation*}
    so $\tilde J$ is actually nilpotent and $\mathfrak h$ must be abelian.
    This, together with (\ref{StrEqQNilp2kn}), gives
    $$
    d\alpha^n=\alpha^1\wedge\gamma_1+\alpha^{\bar1}\wedge\gamma_2+c\al11,
    $$
    for some $\gamma_1\in\Lambda^1_{\mathfrak h\cap\mathfrak k}$, $\gamma_2\in\Lambda^{1,0}_{\mathfrak h\cap\mathfrak k}$ and $c\in\C$.
    We already saw that a $2$-K\"ahler structure $\Omega$ on $(\mathfrak g,J)$ can be written as 
    $$
    \Omega=\Omega_{\mathfrak h}+\alpha^1\wedge\eta+\alpha^{\bar1}\wedge\overline{\eta}+i\al11\wedge\vartheta,
    $$
    with $\Omega_{\mathfrak h}\in\lambdahh22$ real and transverse, $\eta\in\lambdahh12$, $\vartheta\in\lambdahh11$.
    Moreover, $d\Omega=0$ if and only if
    \begin{equation*}
        d\Omega_{\mathfrak h}=\alpha^1\wedge d\eta+\alb1\wedge d\overline{\eta}\in I\pt{\al11} .
    \end{equation*}
    {Similarly to Proposition \ref{quasinilppK}, if $\Omega_{\mathfrak h\cap\mathfrak k}$ is the restriction of $\Omega_{\mathfrak h}$ to $\mathfrak k$,}
    we have
    $$
    \Omega_{\mathfrak h}=\Omega_{\mathfrak h\cap\mathfrak k}+\beta\wedge\alpha^n+\overline{\beta}\wedge\alpha^{\bar n}+\omega \wedge i\al nn,
    $$
    with  $\Omega_{\mathfrak h\cap\mathfrak k}\in\lambdakk {\mathfrak h\cap\mathfrak k}22$ and $\omega\in\lambdakk {\mathfrak h\cap\mathfrak k}11$ real and transverse and $\beta\in\lambdakk {\mathfrak h\cap\mathfrak k}12$.
    We get
    \begin{equation}\label{dOmega22}
    \begin{split}
        d\Omega_{\mathfrak h}&=-\beta\wedge d\alpha^n-\overline{\beta}\wedge d\alpha^{\bar n}+i\omega \wedge d\alpha^n\wedge\alb n-i\omega \wedge d\!\alb n\wedge\alpha^n  \\
            &=\alpha^1\wedge\pt{\beta\wedge\gamma_1+\overline{\beta}\wedge\overline{\gamma_2}+i\omega \wedge\gamma_1\wedge\alb n+i\omega \wedge\overline{\gamma_2}\wedge\alpha^n}   \\
            &\quad+\alb1\wedge\pt{\beta\wedge\gamma_2+\overline{\beta}\wedge\overline{\gamma_1}+i\omega \wedge\gamma_2\wedge\alb n+i\omega \wedge\overline{\gamma_1}\wedge\alpha^n}   \\
            &\quad+c\al11\wedge\pt{-\beta+\overline{\beta}\wedge\overline{\gamma_1}+i\omega \wedge\alb n-i\omega \wedge\alpha^n}.   \\
    \end{split}
    \end{equation}
    We know that $d\Omega_{\mathfrak h}$ must be in the ideal generated by $\al11$, so the second and third lines in {\eqref{dOmega22}} should vanish, giving
    \begin{equation*}
    {\beta\wedge\gamma_1+\overline{\beta}\wedge\overline{\gamma_2}+i\omega \wedge\gamma_1\wedge\alb n+i\omega \wedge\overline{\gamma_2}\wedge\alpha^n}=0.
    \end{equation*}
    Notice that $\beta\wedge\gamma_1+\overline{\beta}\wedge\overline{\gamma_2}$ lies in $\Lambda^{4}_{\mathfrak h\cap\mathfrak k}$, so this is equivalent to 
    \begin{equation}\label{eqinomegai}
    \begin{cases}
        \beta\wedge\gamma_1+\overline{\beta}\wedge\overline{\gamma_2}=0, \\
        \omega \wedge\gamma_1=0, \\
        \omega \wedge\overline{\gamma_2}=0.
    \end{cases}
    \end{equation}
    Recall that $ \omega $ is a transverse $(1,1)$-form, hence positive definite.
    A direct consequence is that the last condition in \eqref{eqinomegai} is equivalent to $\gamma_2=0$.
    This holds because if the $(1,0)$-form $\gamma_2$ is non--zero, for any fixed $\psi\in\Lambda^{n-2,0}$ the positive definiteness of $\omega$ implies
    $$
    0=\omega\wedge\gamma_2\wedge\psi\wedge\overline{\gamma_2}\wedge\overline\psi=\tilde c\,\vol{g},
    $$
    for some positive constant $\tilde c\in\R$, giving a contradiction.
    We can also prove that the second condition in \eqref{eqinomegai} is equivalent to $\gamma_1$ being zero.
    Indeed, the $1$-form $\gamma_1$ can be written as $\gamma_1^{1,0}+\gamma_1^{0,1}$, and $\omega\wedge\gamma_1=0$ is equivalent to
    $$
    \omega\wedge\gamma_1^{1,0}=0,\quad\omega\wedge\gamma_1^{0,1}=0.
    $$
    The same argument used for $\gamma_2$ can then be used for $\gamma_1^{1,0}$ and $\gamma_1^{0,1}$, to conclude that \eqref{eqinomegai} implies $\gamma_1^{1,0}=\gamma_1^{0,1}=0$, namely $\gamma_1=0$. 
    It follows that $d\alpha^n=c{\al11}$, so in particular $d\eta\in I\pt{\al11}$ and
    $$
    d\Omega_{\mathfrak h}=\alpha^1\wedge d\eta+\alb1\wedge d\overline{\eta}=0.
    $$
    From the first line of (\ref{dOmega22}) we get $\omega \wedge d\alpha^n=0$.
    Using again the positive definiteness of $\omega $, we obtain that $d\alpha^n=0$ and $\mathfrak g$ is abelian.
\end{proof}

\section{$(n-2)$-K\"ahler almost abelian solvmanifolds}\label{secAlmostAb}

{In this section we will  discuss the case where $(X, J)$ is   an almost abelian solvmanifold, i.e. a compact  quotient $\Gamma\backslash G$ of a simply connected almost abelian  Lie group $G$ by a lattice $\Gamma$ endowed with an invariant complex structure $J$.
Lemma \ref{symmetrization} allows us to restrict to the study of $p$-K\"ahler structures on unimodular almost abelian Lie  algebras.}

{Let $\mathfrak g$ be an almost abelian Lie algebra of real  dimension ${2n}$ and denote with $\mathfrak a$ its codimension one abelian ideal. 
Let  $(J, g)$  be a Hermitian  structure on $\frak g$ and denote by $\mathfrak a_1$ the $J$-invariant space $\mathfrak a\cap J\mathfrak a$.
Then there exists a unitary basis  $\pg{e_1, \dots , e_{2n}}$  
such that $\mathfrak a=\text{span}\pg{e_1, \dots,  e_{2n-1}}$, $\mathfrak a_1 = \text{span}\pg{e_2, \dots,  e_{2n-1}} $  and $Je_j=e_{2n+1-j}$,  for $j=1, \dots,  n$. 
The matrix associated to $\rest{\text{ad}_{e_{{2n}}}}{\mathfrak   a} $ in this basis can be written as 
$$
\begin{pmatrix}
\lambda & 0\\
v & A
\end{pmatrix},
$$
with $\lambda\in\R$, $v\in\mathfrak a_1$, $A=(a_{j,k
})_{j,k=2}^{2n-1}\in\mathfrak{gl}\pt{\mathfrak a_1}$.
{We will refer to such a basis $\pg{e_1\dots e_{2n}}$ as \textit{adapted} to the Hermitian structure $(J,g)$.}
By \cite{LR,AL}  the integrability  of $J$  is equivalent to $AJ_1=J_1A$, with $J_1\coloneqq\rest J{\mathfrak a_1} $, so that $A$ must satisfy
\begin{equation*}
a_{{2n+1}-j,k}=  \begin{cases}
                -a_{j,{2n+1}-k}  &   k=2,\dots,{n}   \\
                a_{j,{2n+1}-k}   &   k={n+1},\dots,{2n-1}
            \end{cases}
\quad\quad j=2,\dots,{n}.
\end{equation*}
If $\pg{e^1\dots e^{2n}}$ is the dual basis to $\pg{e_1\dots e_{2n}}$, we have that  $\alpha^j=e^j+ie^{2n+1-j}$, for $j=1\dots n$, is a basis of $(1,0)$-forms on $\frak g$  and the complex structure equations of $(\mathfrak g, J)$ are
\begin{equation}\label{genalmabcxstr}
\begin{cases}
    \displaystyle d\alpha^1=\frac{i}{2}\lambda\al11,   &   \\
    \displaystyle d\alpha^j=\frac{i}{2}w_j\al11+\frac{\alpha^1-\alpha^{\bar 1}}{2}\wedge\sum_{k=2}^nb_{jk}\alpha^k,    & j=2,\dots,n.
\end{cases}
\end{equation} 
with $w_j=v_j+i\,v_{{2n+1}-j}$ and $b_{jk}\coloneqq i\,a_{j,k}-a_{{2n+1}-j,k}=i\,a_{j,k}+a_{j,{2n+1}-k}$.

\begin{remark}\label{spMetAAb}
Notice that $\mathfrak g$ is unimodular if and only if $\lambda=-\text{tr}(A)$.
Moreover, $(J,g)$ is K\"ahler if and only if $v=0$ and $A=-A^t$ (see \cite{LR} and \cite[Lemma 3.6]{fp21}).
\end{remark}

We will now prove the following.

\begin{theorem}\label{ThmAlmostAb}
Let  $(\mathfrak g,J)$ be  a unimodular almost abelian Lie algebra of real  dimension $2n\geq6$ endowed with a complex structure $J$.
 If  $(\frak g,  J)$ admits a  $(n-2)$-K\"ahler structure, then   $(\mathfrak g,J)$ is K\"ahler.
\end{theorem}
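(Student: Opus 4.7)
The plan is as follows. By Lemma \ref{symmetrization}, it suffices to show that an invariant transverse $d$-closed $(n-2,n-2)$-form $\Omega$ on $(\mathfrak{g},J)$ forces the Hermitian data to satisfy the K\"ahler conditions $v=0$ and $A=-A^t$ of Remark \ref{spMetAAb}. Fix an adapted basis and the $(1,0)$ coframe $\alpha^1,\dots,\alpha^n$. The subspace $\mathfrak{a}_1=\mathfrak{a}\cap J\mathfrak{a}$ is a $J$-invariant codimension-two \emph{abelian} ideal with $\mathfrak{a}_1^{1,0}=\langle\alpha^2,\dots,\alpha^n\rangle$, and (\ref{genalmabcxstr}) shows that $d\alpha^j\in I(\alpha^1,\alpha^{\bar 1})$ for every $j$. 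So I would decompose
\begin{equation*}
  \Omega=\Omega_0+\alpha^1\wedge\eta+\alpha^{\bar 1}\wedge\bar\eta+i\,\alpha^{1\bar 1}\wedge\vartheta,
\end{equation*}
with $\Omega_0$ and $\vartheta$ real forms on $\mathfrak{a}_1$ of bidegrees $(n-2,n-2)$ and $(n-3,n-3)$ respectively, and $\eta$ of bidegree $(n-3,n-2)$ on $\mathfrak{a}_1$.

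Next I would extract positivity by testing the transversality of $\Omega$ against the simple $(2,0)$-forms $\alpha^1\wedge\alpha^j$ and $\alpha^j\wedge\alpha^k$ for $j,k\geq 2$; for dimensional reasons every term except $\Omega_0$ vanishes in the first test, and every term except $\vartheta$ vanishes in the second. Hence $\Omega_0$ is a transverse $(n-2,n-2)$-form on the complex $(n-1)$-dimensional space $\mathfrak{a}_1$, i.e.\ a balanced form in top holomorphic bidegree, so by the cited result of Michelsohn it is strongly positive and $\Omega_0=\tilde\omega^{n-2}/(n-2)!$ for a positive definite Hermitian $(1,1)$-form $\tilde\omega$ on $\mathfrak{a}_1$. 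Likewise $\vartheta$ is transverse as an $(n-3,n-3)$-form on $\mathfrak{a}_1$, which already suffices to guarantee $\vartheta\wedge\gamma\wedge\bar\gamma\neq 0$ for every nonzero $\gamma\in\mathfrak{a}_1^{1,0}$ (pair the $(n-2,n-2)$-form $\vartheta\wedge\gamma\wedge\bar\gamma$ with any $i\mu\wedge\bar\mu$ transverse to $\gamma$).

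Finally I would impose $d\Omega=0$. Because $\mathfrak{a}_1$ is abelian, every differential produced by (\ref{genalmabcxstr}) lands in $I(\alpha^1,\alpha^{\bar 1})$, so collecting the $\alpha^1$-, $\alpha^{\bar 1}$- and $\alpha^{1\bar 1}$-coefficients of $d\Omega$ yields three algebraic identities on $\mathfrak{a}_1$ coupling $\Omega_0$, $\eta$, $\vartheta$ with $\lambda$, the vector $w=(w_j)$ and the matrix $B=(b_{jk})$. After wedging the $\alpha^{1\bar 1}$ identity with suitable powers of $\tilde\omega$ to decouple the $w$-contribution from the $B$-contribution, the end game should mirror the final step of Theorem \ref{quasinilpNON2k}: the implication ``$\vartheta\wedge\gamma\wedge\bar\gamma=0 \Rightarrow \gamma=0$'' for $\gamma\in\mathfrak{a}_1^{1,0}$, together with the positive-definiteness of $\tilde\omega$, forces first $w_j=0$ for every $j$ (equivalently $v=0$) and then $b_{jk}+\overline{b_{kj}}=0$ for all $j,k$, which read on $A$ gives $A+A^t=0$; the unimodularity $\lambda=-\text{tr}(A)$ then forces $\lambda=0$, and Remark \ref{spMetAAb} concludes. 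The main obstacle I expect is precisely isolating the $b_{jk}+\overline{b_{kj}}$ combination, since $w$, $B$, $\eta$ and $\vartheta$ all mix in the $\alpha^{1\bar 1}$-coefficient of $d\Omega$; as a parallel attack I would invoke Proposition \ref{obstructpK} with the test $3$-form $\beta=\alpha^1\wedge\sum_{j,k=2}^{n}c_{jk}\alpha^{j\bar k}$, for which a direct computation gives $(d\beta)^{2,2}=\alpha^{1\bar 1}\wedge\sigma_C$, where $\sigma_C$ is the real $(1,1)$-form associated to $\tfrac{i\lambda}{2}C+\tfrac12(B^tC-C\bar B)$ for $C$ Hermitian, and then try to choose $C$ so that $\sigma_C$ is semidefinite and nonzero whenever the K\"ahler conditions fail, contradicting $(n-2)$-K\"ahlerness.
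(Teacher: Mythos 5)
Your setup agrees with the paper's up to a point: the decomposition $\Omega=\Omega_0+\alpha^1\wedge\eta+\alpha^{\bar 1}\wedge\bar\eta+i\,\alpha^{1\bar 1}\wedge\vartheta$ along $\mathfrak a_1$, the transversality tests showing that $\Omega_0$ is a transverse $(n-2,n-2)$-form on the $(n-1)$-dimensional space $\mathfrak a_1$, and the appeal to Michelsohn to write $\Omega_0$ as the $(n-2)$-th power of a positive $(1,1)$-form are all exactly what the paper does (the paper additionally changes basis so that this $(1,1)$-form is the standard one, which is what makes the later computation tractable). The gap is in the endgame. The condition $d\Omega=0$ splits into two pieces: the component along $\alpha^1-\alpha^{\bar 1}$ modulo $I(\alpha^{1\bar1})$, which involves \emph{only} $A$ and $\Omega_0$ and yields $A=-A^t$ (in the Michelsohn-adapted basis); and a single equation in the $\alpha^{1\bar1}$-component, of the form $\gamma+\tfrac{\lambda}{2}(\eta+\bar\eta)+\phi+\bar\phi=0$, where $\gamma$ carries all the dependence on $v$ and $\phi$ comes from $d\eta$. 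Since $\eta$ is a free constituent of $\Omega$, this equation does not decouple and does not force $\gamma=0$; hence your claim that closedness ``forces first $w_j=0$ for every $j$ (equivalently $v=0$)'' is unjustified and is in general false in the chosen adapted basis. For the same reason your parallel attack via Proposition \ref{obstructpK} cannot succeed: an algebra with $v\neq0$ but $A=-A^t$ may be isomorphic to a K\"ahler one, hence is $(n-2)$-K\"ahler and admits no obstruction form, so no choice of $C$ can make $\sigma_C$ detect the failure of $v=0$.

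What you are missing is the argument the paper uses precisely to get around this: once $\tilde A=-\tilde A^t$ is established (whence $\lambda=-\text{tr}(\tilde A)=0$ by unimodularity), the matrix $\left(\begin{smallmatrix}\lambda&0\\ \tilde v&\tilde A\end{smallmatrix}\right)$ is shown, via its Jordan form, to be similar to $\left(\begin{smallmatrix}\lambda&0\\ 0&\tilde D\end{smallmatrix}\right)$ with $\tilde D$ antisymmetric, and Freibert's classification of almost abelian Lie algebras then gives an isomorphism of $\mathfrak g$ with an almost abelian algebra having $v=0$ and antisymmetric $A$, which is K\"ahler by Remark \ref{spMetAAb}. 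In other words, the conclusion is reached through a Lie-algebra isomorphism, not by proving $v=0$ for the original adapted data. You also omit the degenerate case $A=0$, which the paper treats separately: there unimodularity gives $\lambda=0$, closedness gives $d\Theta=0$, hence $d\omega_0^{n-2}=0$ for the full fundamental form $\omega_0$, and the Gray--Hervella identity $d\omega_0^{n-2}=0\Rightarrow d\omega_0=0$ concludes. Without these two ingredients the proof does not close.
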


\begin{proof}  We know that there exists   a basis  of $(1,0)$-forms $\alpha^j=e^j+ie^{2n+1-j}$, for $j=1\dots n$,   such that  the complex structure equations of $(\mathfrak g, J)$ are given by 
\eqref{genalmabcxstr}.
Suppose  that $(\mathfrak g,J)$ admits a $(n-2)$-K\"ahler form $\Omega$. 
Then, we can write
    \begin{equation}
    \Omega=\Theta+\alpha^1\wedge\eta+\alpha^{\bar1}\wedge\overline{\eta}+i\al11\wedge\vartheta,
    \end{equation}
with $\Theta\in\Lambda^
{{n-2},{n-2}}_{\mathfrak a_1}$, $\vartheta\in\Lambda^
{{n-3},{n-3}}_{\mathfrak a_1}$ both real and transverse and $\eta\in\Lambda^
{{n-3},{n-2}}_{\mathfrak a_1}$.
The restriction $\Theta$ of $\Omega$ to $\mathfrak a_1$ is a $(n-2,n-2)$-transverse form on a space of complex dimension $n-1$, so it is strictly positive and so  there exists a  new basis $\pg{\beta^2, \dots, \beta^n}$ of $\mathfrak a_1^{1,0}$ such that 
$$
\Theta= \left( i(\beta^{2 \overline 2}+\cdots+ \beta^{n \overline n}) \right)^{n-2}
$$
{\cite{michelsohn}}.
We can then  consider as basis $\{ f_j \}$ of $\mathfrak a_1$  the dual basis   of the basis    $\{ f^j \}$ of $\mathfrak a_1^*$, given by 
$$
f^j \coloneqq\frac{\beta^j+\overline \beta j}2,\quad f^{2n+1-j}\coloneqq -i\frac{\beta^j-\overline \beta^j}2,\quad j=2\dots n.
$$
Note that we  still have $Jf_j=f_{2n+1-j}$ for $j=2\dots n$.
We can complete $\pg{f_2, \dots,  f_{2n-1}}$ to a basis  $\pg{f_1, \dots,  f_{2n}}$ of $\mathfrak g$ just taking $f_1=e_1$ and $f_{2n}=e_{2n}$.
Then we have $\mathfrak a=\text{span}\pg{f_1, \dots,  f_{2n-1}}$ and $J f_1 = f_{2n}$.
Therefore the complex structure equations of $(\frak g, J)$ are still of the form 
\begin{equation*}
\begin{cases}
    \displaystyle d\beta^1= \frac{i}{2}\tilde \lambda\beta^{1 \overline 1},   &   \\
    \displaystyle d\beta^j= \frac{i}{2}\tilde w_j\beta^{1 \overline 1} +\frac{\beta^1-\beta^{\bar 1}}{2}\wedge\sum_{k=2}^n \tilde b_{jk}\beta^k,    & j=2,\dots,n,
\end{cases}
\end{equation*} 
with $\tilde \lambda = \lambda$ and $\beta^1 = \alpha^1$.
The
 pair $(\mathfrak g,J)$ is then determined by
the matrix associated {to $\rest{\text{ad}_{e_{{2n}}}}{\mathfrak   a} $} in this  new basis
$$
\begin{pmatrix}
\lambda     & 0     \\
\tilde v    & \tilde  A
\end{pmatrix},
$$
with $\tilde v\in\mathfrak a_1$, $\tilde A=( \tilde a_{j,k
})_{j,k=2}^{2n-1}\in\mathfrak{gl}\pt{\mathfrak a_1}$, and with $\tilde w_j= \tilde v_j+i\, \tilde v_{{2n+1}-j}$ and $ \tilde b_{jk}\coloneqq i\,\tilde a_{j,k}- \tilde a_{{2n+1}-j,k}=i\, \tilde a_{j,k}+ \tilde a_{j,{2n+1}-k}$.
Recall that the integrability  of $J$  is equivalent to  the condition $\tilde AJ_1=J_1 \tilde A$.
Therefore  $\tilde A$ must satisfy
\begin{equation*}
\tilde a_{{2n+1}-j,k}=  \begin{cases}
                -   \tilde a_{j,{2n+1}-k}  &   k=2,\dots,{n}   \\
                \tilde a_{j,{2n+1}-k}   &   k={n+1},\dots,{2n-1}
            \end{cases}
\quad\quad j=2,\dots,{n}.
\end{equation*}
Moreover,  
\begin{equation*}\begin{split}
    d\Theta=i\pt{\beta^1-\beta^{\bar1}}\wedge\rho+i \beta^{1 \overline 1}\wedge\gamma, \\
    d\eta=i\pt{\beta^1-\beta^{\bar1}}\wedge\phi+\beta^{1 \overline 1}\wedge\psi
\end{split}  \end{equation*}
with $\rho \in\Lambda^{{n-2},{n-2}}_{\mathfrak a_1}$ and $\gamma\in\Lambda^{2n-5}_{\mathfrak a_1}$ real forms.
Notice that $\rho$ and $\phi$ depend on $\tilde A$, while $\gamma$ and $\psi$ depend on {$\tilde v$}.
We have 
\begin{equation*}\begin{split}
   0= d\Omega&=d\Theta+{\lambda} \frac{i}{2}\beta^{1 \overline 1}\wedge\pt{\eta+\bar\eta}-\beta^1\wedge    d\eta-\beta^{\bar1}\wedge\overline{d\eta},   \\
    &=i\beta^{1\overline 1} \wedge\pt{\gamma+\frac{{\lambda}}{2}\pt{\eta+\bar\eta}+\phi+\bar\phi}+\pt{\beta^1-\beta^{\bar1}}\wedge\rho,
\end{split}\end{equation*}
or equivalently
\begin{equation}\label{Omegaclosed}
\rho=0,\quad\quad\gamma+\frac{\lambda}{2}\pt{\eta+\bar\eta}+\phi+\bar\phi=0.
\end{equation}
As a first consequence, we have that the theorem is true when $\tilde A=0$.
In this case, $\rho=0$, $\phi=0$ and $\mathfrak g$ is unimodular if and only if $\lambda=0$, so that \eqref{Omegaclosed} reduces to $\gamma=0$, namely $d\Theta=0$.
Because $d\beta^{1\bar1}=0$ and $d\varphi\in{I}(\beta^{1\bar1})$ for all $\varphi\in\Lambda_{\mathfrak g}$, one has 
$$
0=d\Theta=d\left( i(\beta^{1 \overline 1}+\cdots+ \beta^{n \overline n}) \right)^{n-2},
$$
hence $d(\beta^{1 \overline 1}+\cdots+ \beta^{n \overline n})=0$ (see \cite{GH}) and $(\mathfrak g,J)$ is K\"ahler.
We can now  prove that when $\tilde A\neq0$ the vanishing of $\rho$  implies that $\tilde A=- \tilde A^t$.
Indeed, we have $d\Theta=i^{n-2}(n-2)\pt{\beta^{2 \overline 2}+\cdots+\beta^{n \overline n}}^{n-3}\wedge\,d(\beta^{2 \overline 2} +\cdots+\beta^{n \overline n})$.
    For $j=2\dots n$, 
    $$
    d\beta^{j \overline j}=\frac{i}{2}\beta^{1 \overline 1}\wedge\pt{\tilde w_j\overline \beta j-\overline{\tilde w_j}\beta^j}+\frac{\beta^1-\beta^{\bar 1}}{2}\wedge\sum_{k=2}^n\pt{\tilde b_{jk}\beta^{k \overline j}-\overline{\tilde b_{jk}}\beta^{j \overline k}}
    $$
    For some $c_n\in\R$, we can write
    $$
    \pt{\beta^{2 \overline 2}+\cdots+\beta^{n \overline n}}^{n-3}=c_n\sum_{2\leq l<m\leq n}\beta^{2\bar 2\cdots\widehat{l\bar l}\cdots\widehat{m\bar m}\cdots{n\bar n} }
    $$
    so that 
    $$
    d\Theta=i\beta^{1 \overline 1}\wedge\gamma+\Tilde{c_n}\pt{\beta^1-\beta^{\bar1}}\wedge\sum_{2\leq l<m\leq n}\beta^{2\bar 2\cdots\widehat{l\bar l}\cdots\widehat{m\bar m}\cdots{n\bar n} }\wedge\sum_{j,k=2}^n\pt{\tilde b_{jk}\beta^{ k \overline j}-\overline{\tilde b_{jk}}\beta^{j \overline k}}
    $$
    with $\Tilde{c_n}\in\C$.
  We can rewrite the last sum as
    \begin{equation*}\begin{split}
  \sum_{j=2}^n&\beta^{ i\overline j}\pt{ \tilde b_{jj}-\overline{\tilde b_{jj}} }  +\sum_{2\leq j<k\leq n}\left( \pt{\tilde b_{jk}\al kj-\overline{\tilde b_{jk}}\beta^{ j \overline k}}+\pt{b_{kj}\beta^{ j \overline k}-\overline{b_{kj}}\beta^{k \overline j}}  \right)\\
  &=\sum_{j=2}^n\beta^{j \overline j} \pt{\tilde b_{jj}-\overline{\tilde b_{jj}} }  +\sum_{2\leq j<k\leq n}\left( \beta^{ j \overline k}\pt{ \tilde b_{kj}-\overline{\tilde b_{jk}}}+\beta^{k \overline j} \pt{\tilde b_{jk}-\overline{\tilde b_{kj}}}  \right)  \\
 &=\sum_{j=2}^n2i\, \tilde a_{jj}\beta^{j  \overline j}  +\sum_{2\leq j<k\leq n}\left( \beta^{j \overline k}\pt{\tilde b_{kj}-\overline{\tilde b_{jk}}}+\beta^{k \overline j}\pt{\tilde b_{jk}-\overline{\tilde b_{kj}}}  \right)  
   \end{split}            \end{equation*}
 It follows that up to a complex constant, $\rho$ equals
    \begin{equation*}
    \sum_{l\neq m=2}^n{2i\tilde a_{ll}}\beta^{2\bar 2\cdots\widehat{m\bar m}\cdots{n\bar n} }    +
    \sum_{2\leq l<m\leq n}\beta^{2\bar 2\cdots\widehat{l\bar l}\cdots\widehat{m\bar m}\cdots{n\bar n} }\wedge\left( \beta^{l \overline m}\pt{\tilde b_{ml}-\overline{\tilde b_{lm}}}+\beta^{m \overline l} \pt{\tilde b_{lm}-\overline{\tilde b_{ml}}}  \right) 
    \end{equation*}
    If $\rho=0$, one gets
    \begin{equation*}\begin{cases}
    \displaystyle\sum_{l\neq m} \tilde a_{ll}=0,&   m=2\dots n,     \\
    \tilde b_{lm}=\overline{\tilde b_{ml}},            &   2\leq l<m\leq n ,
    \end{cases}
    \end{equation*} 
 namely
    \begin{equation*}
    \tilde a_{ll}=0,\quad \tilde a_{lm}=- \tilde a_{ml},\quad \tilde a_{{2n+1}-l,m}= \tilde a_{{2n+1}-m,l}=- \tilde a_{m,{2n+1}-l}.
    \end{equation*}
    This, together with the conditions on $\tilde A$ given by the integrability of the complex structure, is enough to conclude that $\tilde A=- \tilde A^t$.  
    Therefore, the matrix associated {to $\rest{\text{ad}_{e_{{2n}}}}{\mathfrak   a} $} in the basis $\pg{f_1, \dots,  f_{2n-1}}$ is 
    $$
    C =   \begin{pmatrix}
      {\lambda} &   0   \\
       \tilde  v       &   \tilde A   \\
    \end{pmatrix},
    $$
    with $\tilde A$ antisymmetric.
    Because $\tilde A\neq0$, the Jordan form of $C$ is 
    $$
    \text{Jord}(C)=\begin{pmatrix}
        \lambda &   \delta  \,\, 0 \cdots 0  \\
        0       &   \text{Jord}(\tilde A)   \\
    \end{pmatrix},
    $$
    where $\text{Jord}({\tilde A})$ is the Jordan form of ${\tilde A}$, $\delta=0$ if $\lambda$ is an eigenvalue of $\tilde A$ and $\delta=1$ otherwise.
    It follows that $C$ is similar to a matrix 
    $$
    D=\begin{pmatrix}
    \lambda &   0   \\
    0       &   \tilde D   \\
    \end{pmatrix},
    $$
with $\tilde D$ antisymmetric and $\text{Jord}({\tilde D})=\text{Jord}({\tilde A})$. 
Consider the almost abelian Lie algebra $\tilde{\mathfrak g}$ with abelian ideal $\tilde{\mathfrak a}=\text{span}\pg{\tilde e_1\dots\tilde{e}_{2n-1}}$ and such that the matrix of $\rest{\text{ad}_{\tilde e_{2n}}}{\tilde{\mathfrak a}}$ is $D$. 
By \cite[Proposition 1]{Freibert}, $\tilde{\mathfrak g}$ is isomorphic to $\mathfrak g$.
As mentioned above (Remark \ref{spMetAAb}), this gives {the existence of a  K\"ahler metric} on $\tilde{\mathfrak g}$.
\end{proof}}

\begin{remark}
    In complex dimension $4$, the theorem states that there are no $2$-K\"ahler almost abelian solvmanifolds that are non--K\"ahler.
\end{remark}

\begin{remark}
In the last part of the proof we found a sufficient condition for an almost abelian unimodular Lie algebra $\mathfrak g$ to be K\"ahler.
Let $(J,g)$ be a Hermitian structure on $\mathfrak g$ and $\pg{e_1\dots e_{2n}}$ be an adapted basis to $(J,g)$.
If $\rest{\text{ad}_{e_{2n}}}{\mathfrak a}$ is conjugated to a matrix of the form
$$
\begin{pmatrix}
\lambda & 0\\
v & A
\end{pmatrix},
$$
with $\lambda\in\R$, $v\in\mathfrak a_1$, $A\in\mathfrak{so}(\mathfrak a_1)$, $\pq{A,\rest J {\mathfrak a_1}}=0$, and $A$ has same rank of $(v\, A)$, then $(\mathfrak g,J)$ is K\"ahler.
\end{remark}

{\bf Acknowledgements.}
Anna Fino and Asia Mainenti are partially supported by Project PRIN 2017 “Real and complex manifolds: Topology, Geometry and Holomorphic Dynamics” and by GNSAGA (Indam). Anna Fino is also supported   by a grant from the Simons Foundation (\#944448). Asia Mainenti would like to thank the Department of Mathematics and Statistics of Florida International University for the hospitality.
The authors would like to thank Adrián Andrada, Elia Fusi, Adela Latorre, Nicoletta Tardini  and the anonymous referee for useful comments.

\end{document}